\numberwithin{equation}{section}
\newtheorem{theorem}{Theorem}[section]
\newtheorem{lemma}[theorem]{Lemma} 
\newtheorem{proposition}[theorem]{Proposition}
\theoremstyle{definition} 
\newtheorem{definition}[theorem]{Definition}
\newtheorem{remark}[theorem]{Remark}
\newtheorem{example}[theorem]{Example}
\DeclareMathOperator{\Sp}{Sp}
\DeclareMathOperator{\Hom}{Hom}
\DeclareMathOperator{\Coh}{Coh}
\DeclareMathOperator{\Fuk}{Fuk}
\newcommand{\bC}{\mathbb C}
\newcommand{\bD}{\mathbb D}
\newcommand{\bR}{\mathbb R}
\newcommand{\bZ}{\mathbb Z}
\newcommand{\bj}{\mathsf{j}}
\newcommand{\cD}{\mathcal D}
\newcommand{\cE}{\mathcal E}
\newcommand{\cL}{\mathcal L}
\newcommand{\cM}{\mathcal M}
\newcommand{\cW}{\mathcal W}
\newcommand{\fM}{\mathfrak{M}}
\newcommand{\fT}{\mathfrak{T}}
\newcommand{\Hol}{\mathrm{Hol}}
\newcommand{\pr}{\mathrm{pr}}
\newcommand{\tL}{\widetilde{L}}
\newcommand{\uX}{\underline{X}}
\newcommand{\hL}{\hat{L}}
\newcommand{\dd}{\partial}
\newcommand{\pf}{\partial f}
\newcommand{\pS}{\partial S}
\newcommand{\lra}{\longrightarrow} 
\definecolor{Cgreen}{RGB}{77,175,74}
\definecolor{Cblue}{RGB}{0, 150, 255}
\definecolor{Corange}{RGB}{255,127,0}
\title{Fukaya category on a symplectic manifold with a B-field}
\author{Haniya Azam, Catherine Cannizzo, Heather Lee, Chiu-Chu Melissa Liu}
\date{}
\begin{document}

\maketitle

\noindent

\begin{abstract}
We describe the formulation of  Fukaya categories of symplectic manifolds with B-fields.  In addition, we give a formula for  how the $A_\infty$ structure maps change as we deform an object by a Lagrangian isotopy.
\end{abstract}

\tableofcontents

\vspace{12pt}

\section{Introduction}

In this paper, we consider the Fukaya category on a symplectic manifold with a B-field  in the following sense. 

\begin{definition}A {\em B-field} on a symplectic manifold $(X,\omega)$ is a real closed 2-form 
$B\in \Omega^2(X)$.   Given a symplectic manifold $(X,\omega)$ with a B-field $B$, we define
the {\em complexified symplectic form} to be the complex 2-form $\omega_\bC = B+ i\omega \in \Omega^2(X,\bC)$. 
\end{definition}

One motivation for introducing the B-field comes from mirror symmetry.  For example, suppose $X$ and $Y$ were a mirror pair of Calabi-Yau manifolds.  The mirror map gives a local isomorphism between the moduli space of complexified K\"{a}hler structures on $X$, which has complex dimension $h^{1,1}(X)$, and the moduli space of complex structures on $Y$, which has  complex dimension $h^{n-1,1}(Y)$ where $n$ is the complex dimension of $Y$,  hence  $h^{1,1}(X)=h^{n-1,1}(Y)$.  The space of 
K\"ahler classes on $X$ is of real dimension $h^{1,1}(X)$, so it is not enough to determine all the complex structures on the mirror.  At the level of homological  mirror symmetry (HMS) conjectured and formulated by Kontsevich \cite{hms}, the mirror statement is that 
\begin{equation}\label{eq:hms}
D^\pi\Fuk (X, (\omega_X)_\bC)\cong D^b\Coh (Y, J_Y),
\end{equation}
where $J$ denotes the complex structure,  $D^\pi \Fuk(X,(\omega_X)_\bC)$ is the split-closed derived Fukaya category,  and $D^b\Coh(Y, J_Y)$ is the bounded derived category of coherent sheaves.

The introduction of B-field is not new, as it originated from the study of strings in background fields \cite[Section 3.4]{GSW}, then it appeared in generalized complex geometry \cite{HitchenGeneralizedCY}, as well as a volume of (mostly early) work on SYZ mirror symmetry \cite{GrossSYZ, Hi99} and HMS \cite{zp, Fuk02, AKO} (apologies for this very incomplete list of references as it is impossible to list them all).   However, we cannot find any existing literature that details all the basic ingredients for constructing the Fukaya category on a general symplectic manifold when B-field is involved, so we decided to write it ourselves which resulted in this paper.  Many of the recent works on HMS prove  statements like Equation \eqref{eq:hms} for particular families of real symplectic forms and the mirror complex structures (note the HMS statement is a bit different from Equation \eqref{eq:hms} when the manifolds involved are not Calabi-Yau).  In our forthcoming work \cite{ACLLa}, we prove a global HMS result for all complex structures on the genus two curves and the complexified symplectic structures on its mirror.  Then it becomes important to understand the Fukaya category on a symplectic manifold with B-field.

Computing the $A_\infty$ product $\mu^k$ of the Fukaya category on $(X,\omega_\bC)$ is often difficult, and one useful strategy is to deform one (or more) of the Lagrangian objects to one(s) for which $\mu^k$ is easier to compute.  In Section \ref{sec: isotopy}, we deform an object by a Lagrangian isotopy (which is more general than a Hamiltonian isotopy) and give an explicit formula for how $\mu^k$ changes as we perform such a deformation.

\paragraph{Acknowledgement.}  C. Cannizzo was partially supported by NSF DMS-2316538.

\section{Objects}\label{sec:objects}

Let $(X,\omega_\bC = B + i\omega)$ be a symplectic manifold of real dimension $2n$ with a B-field, where $\omega$ is a symplectic
form and $B$ is a closed 2-form.  An object in  the Fukaya category
$\Fuk(X,\omega_\bC)$ of $(X,\omega_\bC)$ is a triple $\hat{L}= (L,\cL, \nabla)$, where
\begin{enumerate}
\item $L$ is a  Lagrangian submanifold in the symplectic manifold $(X,\omega)$, i.e., $\dim L = n$ and $\omega|_L =0$. 

\item $L$ is oriented and equipped with a spin structure\footnote{Recall that $L$ is orientable iff $w_1(TL)=0\in H^1(L;\bZ_2)$; in this case, all the possible orientations on $L$ form a {\em torsor} (i.e. principal homogeneous space) for $H^0(L;\bZ_2)$. If $L$ is orientable then it is
spin iff $w_2(TL)=0 \in H^2(L;\bZ_2)$; in this case all the possible spin structures form a torsor for $H^1(L;\bZ_2)$.}.

\item  $\cL$ is a $C^\infty$ complex line bundle on $L$.
\item $\nabla$ is a unitary connection on  $\cL$.
\item The curvature $F_\nabla$ of $\nabla$ satisfies $F_\nabla = - 2\pi i  B|_L$.
\end{enumerate} 

\begin{remark}
Recall that the first {\em Chern  form} of $(\cL, \nabla)$ is a real 2-form on $L$: 
$$
c_1(\cL, \nabla) =\frac{i}{2\pi} F_\nabla \in \Omega^2(L). 
$$
So condition 4 can be re-written as
$$
B|_L = c_1(\cL,\nabla).
$$
Therefore, 
$$
[B|_L]  = c_1(\cL) \in H^2_{\mathrm{dR}}(L;\bR) = H^2(L;\bR).
$$
\end{remark}

\begin{remark}
In \cite{Fuk02}, $\Omega = \omega + iB = -i(-B + i\omega)$, so $B$ in this paper corresponds to  $-B$ in  \cite{Fuk02}
and in particular \cite[Definition 1.1]{Fuk02}. 
\end{remark} 

Suppose that $\nabla_0, \nabla_1$ are two unitary connections on a $C^\infty$ complex line bundle
$\cL$ on a Lagrangian submanifold $L$ in $X$ such that
$$
F_{\nabla_j} = -2\pi i B|_L, \quad j=0,1.
$$
Then $\nabla_1 -\nabla_0 = -2\pi i a$ for some closed real 1-form $a\in \Omega^1(L)$.

\subsection{Hamiltonian equivalence} \label{subsec: product def Hamiltonian equivalence} 

Recall that a Hamiltonian isotopy on a symplectic manifold $(X,\omega)$ is a smooth map 
$\varphi: X\times [0,1]\to X$ such that 
$$
\varphi(x,0) =x, \quad \frac{\partial \varphi}{\partial t}(x,t) = X_H(\varphi(x,t)) 
$$ 
where $X_H$ is the Hamiltonian vector field associated to a time dependent Hamiltonian
$H: X\times [0,1] \to X$. For each $t\in [0,1]$, define $\varphi^t: X\to X$ by $\varphi^t(x)= \varphi(x,t)$.

\begin{definition} Let $\hat{L} = (L, \cL,  \nabla)$ and $\hat{L}' = (L', \cL',  \nabla')$ be two objects in $\Fuk(X,\omega_\bC)$.
We say $\hat{L}$ is {\em Hamiltonian equivalent} to  $\hat{L}'$ if there exists
a Hamiltonian isotopy $\varphi: X\times[0,1]\to X$, a $C^\infty$ complex line bundle $\tilde{\cL}$ on $L\times [0,1]$, and a unitary connection $\tilde{\nabla}$ on  $\tilde{\cL}$, such that
\begin{enumerate}
\item $\varphi^1(L) =L'$. The orientation and spin structure
on $L$ is the pullback of those on $L'$ under the diffeomorphism $\varphi^1: L\to L'$.
\item The curvature $F_{\tilde{\nabla}}$ of $\tilde{\nabla}$ satisfies $F_{\tilde{\nabla}} = -2\pi i  \left. (\varphi^*B)\right|_{L\times [0,1]}$.
\item Given $t\in [0,1]$, define $\iota_t: L\to L\times [0,1]$ by $\iota_t(x) = (x,t)$. Then
$$
\iota_0^*(\tilde{\cL}, \tilde{\nabla}) = (\cL, \nabla),\quad
\iota_1^*(\tilde{\cL},  \tilde{\nabla}) = (\varphi^1)^*(\cL',  \nabla').
$$
\end{enumerate} 
\end{definition}

Let $\fM(\Fuk(X,\omega_\bC))$ be the moduli space of Hamiltonian equivalence classes of objects in $\Fuk(X,\omega_\bC)$; this  moduli space 
is denoted $\mathcal{LAG}(M,\Omega)$ in \cite[Chapter 1 \S 1]{Fuk02}, where $(M,\Omega)$ corresponds to $(X,\omega_\bC)$ in this paper. Let $\hat{L} =(L,\cL, \nabla)$ be an object in $\Fuk(X,\omega_\bC)$, and let $[\hat{L}] \in \fM(\Fuk(X,\omega_\bC))$ denote its Hamiltonian equivalence class. Then 
$$
T_{[\hat{L}]} \fM(\Fuk(X,\omega_\bC)) \simeq  H^1(L;\bC). 
$$
We refer to \cite[Chapter 1 \S 1]{Fuk02} for more details.

\section{Morphisms} \label{sec: morphisms}
Let $\hat{L}_0 = (L_0,\cL_0,  \nabla_0)$ and  $\hat{L}_1 = (L_1,\cL_1,  \nabla_1)$ be two objects in $\Fuk(X,\omega_\bC)$, and 
assume that the Lagrangian submanifolds $L_0$ and $L_1$ intersect transversally at finitely many points. Then 
$$
\Hom_{\Fuk(X,\omega_\bC)} (\hat{L}_0, \hat{L}_1)  = CF(\hat{L}_0, \hat{L}_1).
$$ 
As a complex vector space,
$$
CF(\hat{L}_0, \hat{L}_1) = \bigoplus_{p\in L_0\cap L_1} \Hom_{\bC}( (\cL_0)_p,  (\cL_1)_p),
$$
where $\Hom_{\bC}( (\cL_0)_p,  (\cL_1)_p)\simeq \bC$ is the space of  $\bC$-linear maps from $(\cL_0)_p$ to $(\cL_1)_p$.  For any $p\in L_0\cap L_1$,
\begin{equation}\label{eqn:duality-p}
\Hom_{\bC} ((\cL_1)_p, (\cL_0)_p) \cong \Hom_{\bC} ((\cL_0)_p, (\cL_1)_p)^\vee.
\end{equation}
Therefore, 
\begin{equation}\label{eqn:duality-CF}
CF(\hat{L}_1, \hat{L}_0) = CF(\hat{L}_0, \hat{L}_1)^\vee.
\end{equation}

\section{Grading}\label{sec:grading}
The main reference of this subsection is P. Seidel's original paper \cite{seidel_lagr}.  We also follow \cite[Section 1.3]{fuk_intro} and \cite[Section 2.6]{math257b} at places.

\subsection{Preliminaries on principal bundles and associated fiber bundles}
Let $G$ be a Lie group, and let $\pi: P\to X$ be a principal $G$-bundle, i.e. a principal fiber bundle with total space $P$, base $X$, and structure group $G$. Then $G$ acts smoothly and freely on $P$ on the right, and $\pi: P\to X$ is the natural projection $P\to P/G$. 

Let $F$ be a smooth manifold equipped with a left $G$-action. Then  
$G$ acts freely on $P\times F$ on the right by 
$$
(u,\xi)\cdot g = (u\cdot g, g^{-1}\cdot \xi) \quad 
g\in G, \ u\in P, \ \xi\in F.
$$
Let $P\times_G F:= (P\times F)/G$ be the quotient by the above action. 
We have the following Cartesian diagram: 
$$
\xymatrix{
P\times F \ar[r] \ar[d]^{\pr_1} & P\times_{G} F = (P\times F)/G \ar[d]^{\hat{p}}\\
P\ar[r]^\pi &  X = P/G
}
$$
where $\pr_1$ is projection to the first factor, $\hat{p}: P\times_G F\to X$ is a smooth fiber bundle
with fiber $F$ and base $X$, and the horizonal arrows are principal $G$-bundles.

\begin{example} \label{ex:homogeneous-space}

Let $H$ be a closed subgroup of $G$. Then $H$ is a Lie subgroup, $G/H =\{ aH: a\in G\}$ is a smooth manifold, and the natural projection $G\to G/H$ is a principal $H$-bundle. $G$ acts on $G/H$ smoothly and transitively on 
the left by $g\cdot aH = gaH$ where $g, a\in G$. The associated fiber bundle
$P\times_{G} G/H$ can be identified with $P/H$, where the right $H$-action 
is the restriction of the free right $G$-action on $P$, and we have natural projections
$$
P\lra P/H = P\times_{G} G/H \lra  X = P/G, 
$$
where $P\to P/H$ a principal $H$-bundle, and $P/H\to P/G$ is a fiber bundle with base $X$ and fiber $G/H$.
\end{example}

\begin{example}[associated vector bundles] \label{ex:associated-vector-bundle}
Let $\rho: G\lra GL(r,\bC)$ be a group homomorphism. Then $G$ acts $\bC$-linearly on $\bC^r$ on the left by 
$g\cdot v = \rho(g) v$, where we view $v \in \bC^r$ as a column vector with $r$ components. The associated fiber bundle
$$
P\times_\rho \bC^r := P\times_G \bC^r
$$
is a complex vector bundle of rank $r$ over $X$, known as the vector bundle associated
to the principal $G$-bundle $P$ and the representation $\rho$. 
\end{example}

\begin{example}\label{ex:structure-group}
Let $\phi: G\to K$ be a homomorphism of Lie groups. Then $G$ acts on $K$ on the left by 
$$
g\cdot k = \phi(g) k,\quad g\in G,\ k\in K.
$$
The associated fiber bundle $P\times_G K$ is a principal $K$-bundle over $X$.
\end{example}

\subsection{Lagrangian Grassmannian}
As a set, the Lagrangian Grassmannian $LGr(\bR^{2n})$ is the set of all linear Lagrangian subspaces of $(\bR^{2n},\omega_0)$ where $\omega_0$ is the standard linear symplectic structure on $\bR^{2n}$. The symplectic group $Sp(2n,\bR)$ acts transitively on $LGr(\bR^{2n})$ on the left, and the stabilizer of $\bR^n\times \{0\}\subset \bR^{2n}$ is the standard Siegel parabolic subgroup
$$
P_n = \Big\{ \left[\begin{array}{cc} A & B\\ C & D\end{array}\right] \in  Sp(2n,\bR) \Big| \; C=0\Big\}
     = \Big\{ \left[\begin{array}{cc} A & B\\ 0 & (A^T)^{-1}\end{array}\right]\Big| \; A\in GL_n(\bR),  B\in M_n(\bR), AB^T=BA^T \Big\}.  
$$
Therefore, 
$$
LGr(\bR^{2n}) = Sp(2n,\bR)/P_n.
$$
The right hand side is a homogeneous space, which is equipped with the structure of a smooth manifold determined by the smooth structure on the Lie group $Sp(2n,\bR)$. We have
\begin{equation} \label{eqn:Sp-G}
\dim LGr(\bR^{2n}) = \dim Sp(2n,\bR)- \dim P_n = (2n^2+n) - \frac{3n^2+n}{2} =\frac{n(n+1)}{2}. 
\end{equation}

Let $LGr(\bC^n)$ denote the set of all linear Lagrangians in $\bC^n$ with the standard linear symplectic structure $\omega_0$ which is compatible with the standard complex structure on $\bC^n$. These structures determine the standard hermitian inner product on $\bC^n$. The unitary group $U(n)$ acts transitively on the left on $LGr(\bC^n)$, and the stabilizer of $\bR^n\subset \bC^n$ is the orthogonal group $O(n)$, so 
$$
LGr(\bC^n) = U(n)/O(n).
$$
We have
\begin{equation}\label{eqn:U-O}
\dim LGr(\bC^n) = \dim U(n)-  \dim O(n) = n^2 - \frac{n(n-1)}{2} =\frac{n(n+1)}{2}
\end{equation}
which is consistent with \eqref{eqn:Sp-G}. The diffeomorphism  $LGr(\bC^n) \cong LGr(\bR^{2n})$ is induced by the inclusion 
$$
U(n) \hookrightarrow Sp(2n, \bR), \quad A + i B \mapsto \left[\begin{array}{cc} A & -B \\ B & A\end{array} \right].
$$
as a closed Lie subgroup. 

\subsection{The Lagrangian Grassmannian bundle over a symplectic manifold}\label{subsec:LGr over symplectic mfd}
Let $(X,\omega)$ be a symplectic manifold of real dimension $2n$. Given any point $p \in X$, a symplectic basis of $T_p X$ is an ordered $\bR$-basis 
$(e_1,\ldots, e_{2n})$ of $T_p X\cong \bR^{2n}$ such that
for $i, j\in \{1,\ldots,n\}$
$$
\omega(p)(e_i, e_j) = \omega(p)(e_{n+i}, e_{n+j})=0, \quad 
\omega(p)(e_i, e_{n+j}) = \delta_{ij}. 
$$
We define the symplectic frame bundle $F(TX,\omega)\to X$ to be the fiber bundle
whose fiber over $p\in X$ is the set of all symplectic basis of $(T_p X, \omega(p))$. Then $F(TX,\omega)\to X$ is a principal $\Sp(2n,\bR)$-bundle over $X$.
The Lagrangian Grassmannian $LGr(X,\omega)\to X$ is the fiber bundle
whose fiber over $p$ is the set of all Lagrangian subspaces of $(T_pX, \omega(p))$. We have
$$
LGr(X,\omega) = F(TX,\omega)\times_{Sp(2n,\bR)} LGr(\bR^{2n}) = F(TX,\omega)\times_{Sp(2n,\bR)}(Sp(2n,\bR)/P_n) = F(TX,\omega)/P_n. 
$$
Note that this is a special case of Example \ref{ex:homogeneous-space}.

\subsection{The Lagrangian Grassmannian bundle over an almost K\"{a}hler manifold}\label{sec:LGr}
Let $(X, \omega, J)$ be an almost K\"{a}hler manifold of real dimension $2n$, where $\omega$ is a symplectic form  and $J$ is an $\omega$-compatible almost complex structure. 
Then $T_\bC X = (TX,J)$ is a Hermitian vector bundle of rank $n$ 
over $X$, and $\det(T_\bC X) =\Lambda^nT_\bC X$ is a hermitian line bundle over $X$.
Let $F(TX,\omega,J)\to X$ be the unitary frame bundle whose fiber
over $p\in X$ is the set of all ordered orthonormal $\bC$-basis of $T_\bC X$. Then $F(TX,\omega,J)\to X$ is a principal $U(n)$-bundle over $X$. We have 
$$
F(TX,\omega)= F(TX,\omega,J)\times_{U(n)}Sp(2n,\bR)
$$
which is a special case of Example \ref{ex:structure-group}, and 
$$
LGr(X,\omega) = F(TX,\omega, J)\times_{U(n)} LGr(\bC^n) = F(TX,\omega,J)\times_{U(n)} (U(n)/O(n))
= F(TX,\omega, J)/O(n)
$$
which is a special case of Example \ref{ex:homogeneous-space}. We also have
$$
F(TX,\omega,J)\times_{\det}\bC = \det(T_\bC X)
$$
which is a special case of Example \ref{ex:associated-vector-bundle}.
The determinant $\det: U(n)\to U(1)$ induces a surjective smooth map
$$
LGr(\bR^{2n}) = U(n)/O(n) \to U(1)/O(1) \cong U(1)
$$
where the isomorphism $U(1)/O(1)\cong U(1)$ is given by $aO(1)\mapsto a^2$. Here $a O(1) = \{\pm a\}$ is the left coset of $a\in U(1)$. This gives rise to 
\begin{equation}\label{eqn:phi}
\phi: U(n)/O(n) \lra U(1), \quad a O(n) \mapsto \det(a)^2
\end{equation}
which induces an isomorphism
$$
\phi_*: \pi_1(U(n)/O(n))\lra \pi_1(U(1))\cong \bZ. 
$$
The map \eqref{eqn:phi} is $U(n)$-equivariant: for any $a,b\in U(n)$,
$$
\phi(a\cdot bO(n)) =\phi(ab O(n)) = \det(ab)^2 = \det(a)^2 \det(b)^2 = \det(a)^2 \phi(bO(n)). 
$$
We obtain a map from $LGr(X,\omega)$, the Lagrangian Grassmannian bundle
of $(X,\omega)$, to the unitary frame bundle 
$F(\det(T_\bC X)^{\otimes 2})$ of the Hermitian line bundle $\det(T_\bC X)^{\otimes 2}$: 
$$
\phi_X: LGr(X,\omega) = F(TX,\omega,J)/O(n)
\lra F(\det(T_\bC X)^{\otimes 2})= F(TX,\omega,J)\times_{\det^2} U(1)
$$
The following conditions are equivalent:
\begin{enumerate}
\item $F(\det(T_\bC X)^{\otimes 2})$ is a trivial $U(1)$-bundle over $X$.
\item $\det(T_{\bC}X)^{\otimes 2} = F(TX,\omega, J)\times_{\det^2}\bC$ is a trivial complex line bundle over $X$.
\item $2c_1(T_\bC X) = 0 \in H^2(X;\bZ)$.
\end{enumerate}

In the following table we compare notation in this paper and in relevant sections in 
\cite{seidel_lagr, fuk_intro, math257b}.

\begin{center}

\begin{tabular}{|c|c| c| c| c| c| c|} \hline
This paper &  $LGr(\bR^{2n})$ & $X$ & $F(TX,\omega)$ & $LGr(X,\omega)$ & $F(TX, \omega, J)$ 
& $\det(T_\bC X)^{\otimes 2}$ \\ \hline
\cite[Section 2]{seidel_lagr} & $\cL(2n)$ & $M$  & $P$ & $\cL$ & $P_U$  & $\Delta$\\ \hline
\cite[Section 1.3]{fuk_intro} & $LGr(n)$ & $M$  & & $LGr(TM)$ &  &  \\ \hline
\cite[Section 2.6]{math257b} & $\Lambda(n)$ & $X$ & & $\cL$  & & $\Lambda_{\bC}^n(TX)^{\otimes 2}$ \\ \hline
\end{tabular}
\end{center}

\subsection{Existence of $\bZ$-grading.}\label{sec:Z-grading-details}
The Lagrangian Floer cohomology group does not admit a $\bZ$-grading in general. Following M. Kontsevich and P. Seidel \cite[Example 2.9]{seidel_lagr}, we now make the additional assumption that $2c_1(TX)=0$, which implies the existence of a global unitary frame $\Theta$ of the Hermitian line bundle 
$\det(T_\bC^*X)^{\otimes 2}$ (the dual of 
$\det(T_\bC X)^{\otimes 2}$). This defines a trivialization 
of $F(\det(T_\bC X)^{\otimes 2})$:
$$
\Theta: F(\det(T_\bC X)^{\otimes 2}) \stackrel{\cong}{\lra} X\times U(1). 
$$
Let 
$$
\exp(2\pi i\cdot) : \bR\lra U(1), \quad t \mapsto e^{2\pi i t}
$$
be the universal cover. We have the following commutative diagram
$$
\xymatrix{
\widetilde{LGr}(X,\omega) \ar[d]^{\pi_{LGr}} \ar[r]^{\widetilde{\phi}_X\quad} &
\widetilde{F}(\det(T_\bC X)^{\otimes 2}) \ar[d]\ar[r]^{\;\widetilde{\Theta}}_{\cong} & 
X\times \bR \ar[d]^{id_X\times \exp(2\pi i\cdot) } \ar[r]^{\widetilde{p}_2} & \bR\ar[d]^{\exp(2\pi i\cdot)} \\
LGr(X,\omega) \ar[r]^{\phi_X\quad} &
F(\det(T_\bC X)^{\otimes2}) \ar[r]^{ \; \Theta}_{\cong} & 
X\times U(1) \ar[r]^{p_2}  & U(1)
}
$$
In the above diagram, all three squares are Cartesian. Define\footnote{In \cite{seidel_lagr}, $\widetilde{LGr}(X,\omega)$ is called
an $\infty$-fold Maslov covering and denoted $\cL^\infty$. The map
$\alpha_\Theta$ is denoted $\det^2_\Theta$ in \cite{seidel_lagr}.}
$$
\alpha_\Theta:= p_2\circ \Theta\circ \phi_X: LGr(X,\omega)\to U(1). 
$$
Given any Lagrangian $L\subset X$, define $s_L: L\lra LGr(X,\omega)$ by $s_L(p) = (p, T_p L)$, and define
$$
\alpha_L:= \alpha_\Theta \circ s_L:  L\to U(1).
$$ 
Let $\nu\in H^1(U(1);\bZ)$ be the Poincar\'{e} dual of the class of a point $[\mathrm{pt}]\in H_0(U(1);\bZ)$.
Then $H^1(U(1);\bZ)=\bZ\nu$. The image of $\nu$ under the injective group homomorphism
$H^1(U(1);\bZ) \cong \bZ \lra H^1(U(1);\bR)\cong \bR$  is represented by the closed 1-form $dt$. The Maslov class of $L$ is defined by 
$$
\mu_L:= \alpha_L^*\nu \in H^1(L;\bZ).
$$
The following conditions are equivalent:
\begin{enumerate}
\item $\mu_L=0$.
\item $\alpha_L$ is homotopic to a constant map.
\item  There exists a smooth map $\widetilde{\alpha_L}: L\to \bR$ such that $e^{2\pi i \widetilde{\alpha}_L} =\alpha_L$. 
\item There exists a smooth map 
$\widetilde{s_L}: L \to \widetilde{LGr}(X,\omega)$ such that
$\pi_{LGr}\circ \widetilde{s_L} = s_L$.
\end{enumerate}

A graded Lagrangian in $(X,\omega, J, \Theta)$ is a pair $(L,\widetilde{\alpha_L})$, where $L$ is a Lagrangian in $(X,\omega)$ with
$\mu_L=0$, and $\widetilde{\alpha_L}: L\lra \bR$ is a real-valued function satisfying
the condition $e^{2\pi i\widetilde{\alpha_L} }=\alpha_L: L\to U(1)$. The function $\widetilde{\alpha_L}$ is unique up to addition of a constant integer. 

Let $\tL_0= (L_0, \widetilde{\alpha_{L_0}})$ and $\tL_1= (L_1,\widetilde{\alpha_{L_1}})$ be two graded Lagrangians in $(X,\omega, J, \Theta)$ such that 
$L_0$ and $L_1$ intersect transversely at finite many points, and let $p\in L_0\cap L_1$.  Then $T_p L_0$ and $T_p L_1$ are linear Lagrangian subspaces
of $T_pX$, and $T_p L_0 \cap T_p L_1 =\{0\} \subset T_p X$. There exists
a linear symplectomorphism $A: T_p X \to \bC^n$ such that 
$A(T_p L_0) =\bR^n$ and $A(T_p L_1) = i\bR^n$.  Following \cite[Section 1.3]{fuk_intro}, we define  
$$
\lambda: [0,1]\to LGr(T_p X), \quad  
t\mapsto  A^{-1}\left( e^{-i\pi t/2} \bR^n\right).
$$
Then $\lambda(0)=T_p L_0$ and $\lambda(1)=T_p L_1$. The path $\lambda$ is known as a {\em canonical short path} from $T_p L_0$ to $T_p L_1$. Any two canonical short paths are fixed point homotopic. Let $\alpha_\lambda := \alpha_\Theta \circ \lambda: [0,1]\to U(1)$.
By path lifting property of the covering map $\exp(2\pi i \cdot):\bR\to U(1)$, there exists a unique $C^\infty$ map $\widetilde{\alpha}_\lambda: [0,1]\to \bR$ such that  
$$
e^{2\pi i\widetilde{\alpha_\lambda} } = \alpha_\lambda, \quad
\widetilde{\alpha_\lambda}(0)= \widetilde{\alpha_{L_0}}(p).
$$
Then $\widetilde{\alpha_\lambda}(1) \in \bR$ is independent of choice of the canonical short path $\lambda$ since any two canonical short paths are fixed point homotopic. Define
$$
\deg(\tL_0, \tL_1; p):= \widetilde{\alpha_{L_1}}(p) - \widetilde{\alpha_\lambda}(1).
$$
Then $\deg(\tL_0, \tL_1; p)\in \bZ$ since
$$
e^{2\pi i\widetilde{\alpha_{L_1}}(p) } = e^{2\pi i \widetilde{\alpha_\lambda}(1) } = \alpha_{L_1}(p). 
$$

The following lemma is well-known. We include a proof for completeness. 
\begin{lemma}\label{A-model-Serre}
$\deg(\tL_0, \tL_1; p) + \deg(\tL_1, \tL_0; p)=n.$ 
\end{lemma}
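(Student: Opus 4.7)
The plan is to compute each degree explicitly from the definition by producing the canonical short path in an adapted $\bC^n$-model, reading off the change in the lift $\widetilde{\alpha_\lambda}$, and then summing. The observation that makes this clean is that the two canonical short paths are completely symmetric: each contributes a lift shift of exactly $-n/2$.

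Fix a linear symplectomorphism $A:T_pX\to\bC^n$ with $A(T_pL_0)=\bR^n$ and $A(T_pL_1)=i\bR^n$, so $\lambda_{01}(t)=A^{-1}(e^{-i\pi t/2}\bR^n)$. For the reverse direction, take $B:=-iA$, which is again a linear symplectomorphism and satisfies $B(T_pL_1)=\bR^n$ and $B(T_pL_0)=i\bR^n$, giving $\lambda_{10}(t)=B^{-1}(e^{-i\pi t/2}\bR^n)$. The key computation takes place in $LGr(\bC^n)=U(n)/O(n)$: writing $e^{-i\pi t/2}\bR^n=(e^{-i\pi t/2}I_n)\cdot\bR^n$ with $e^{-i\pi t/2}I_n\in U(n)$, we get
$$
\phi(e^{-i\pi t/2}\bR^n)=\det(e^{-i\pi t/2}I_n)^2=e^{-i\pi tn}.
$$
Transferring back to $LGr(T_pX)$ (via $A$ for the first path and $B$ for the second), one obtains $\alpha_{\lambda_{01}}(t)=\alpha_{L_0}(p)\cdot e^{-i\pi tn}$ and $\alpha_{\lambda_{10}}(t)=\alpha_{L_1}(p)\cdot e^{-i\pi tn}$. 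The pinned lifts are therefore $\widetilde{\alpha_{\lambda_{01}}}(t)=\widetilde{\alpha_{L_0}}(p)-tn/2$ and $\widetilde{\alpha_{\lambda_{10}}}(t)=\widetilde{\alpha_{L_1}}(p)-tn/2$, so
\begin{align*}
\deg(\tL_0,\tL_1;p)&=\widetilde{\alpha_{L_1}}(p)-\widetilde{\alpha_{\lambda_{01}}}(1)=\widetilde{\alpha_{L_1}}(p)-\widetilde{\alpha_{L_0}}(p)+n/2,\\
\deg(\tL_1,\tL_0;p)&=\widetilde{\alpha_{L_0}}(p)-\widetilde{\alpha_{\lambda_{10}}}(1)=\widetilde{\alpha_{L_0}}(p)-\widetilde{\alpha_{L_1}}(p)+n/2,
\end{align*}
and summing gives $n$.

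The one step requiring care is the transfer of the identity $\phi(e^{-i\pi t/2}\bR^n)=e^{-i\pi tn}$ to a statement about $\alpha_\Theta$ along $\lambda_{01}$, since $A$ is only a linear symplectomorphism and need not intertwine the complex structures: the restriction of $\alpha_\Theta$ to $LGr(T_pX)$ is built from the fixed $\omega$-compatible $J$, while $\phi\circ A_*$ implicitly uses $A^*J_{\mathrm{std}}$. Both, however, are Maslov-type maps $LGr(T_pX)\to U(1)$ built from $\omega$-compatible complex structures on $T_pX$, and the space of such structures is contractible; hence the two maps are homotopic, induce the same map on $\pi_1(LGr(T_pX))\to\pi_1(U(1))=\bZ$, and therefore produce the same change in continuous lift along any path. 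Equivalently, one can concatenate $\lambda_{01}$ and $\lambda_{10}$ into a loop $\gamma$ based at $T_pL_0$ and observe that $-(\deg(\tL_0,\tL_1;p)+\deg(\tL_1,\tL_0;p))$ is the winding number of $\alpha_\Theta\circ\gamma$ in $U(1)$, which the $\bC^n$-model computes as $-n$. I expect this compatibility between $\alpha_\Theta$ and the $\bC^n$-model $\phi$ to be the only nontrivial step; once granted, the rest is the explicit computation above.
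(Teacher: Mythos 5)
Your final identity is correct, and your closing ``concatenate into a loop'' remark is in fact the paper's own argument: the paper's key step is precisely that along the loop obtained by concatenating the two canonical short paths the lift of $\alpha_\Theta$ changes by $-n$, and the sum of the two degrees is minus that change. However, the explicit computation you display before that contains a genuine error, and the word ``equivalently'' conceals it. Two homotopic maps $LGr(T_pX)\to U(1)$ induce the same winding number on \emph{loops}, but they do \emph{not} produce the same change of continuous lift along an \emph{open} path: their ratio is a null-homotopic map $e^{ih}$ with $h$ generally non-constant, so the lift changes along a path differ by $h(\mathrm{end})-h(\mathrm{start})$. Since $A$ is only symplectic, not $J$-unitary, the maps $\alpha_\Theta|_{LGr(T_pX)}$ and $\phi\circ A_*$ differ by exactly such a factor, and the claim that each canonical short path shifts the lift by exactly $-n/2$ is false. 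Consequently your formula $\deg(\tL_0,\tL_1;p)=\widetilde{\alpha_{L_1}}(p)-\widetilde{\alpha_{L_0}}(p)+n/2$ fails in general: in Example \ref{ex:grading_torus} with $k_0=0$, $k_1=1$ the degree is $0$, while your formula gives $-n(\phi_{1}-\phi_{0})+n/2=n/4$.

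What saves the proof is that the lemma only needs the \emph{sum} of the two degrees, which equals minus the winding of $\alpha_\Theta$ around the concatenated loop $\lambda_{01}*\lambda_{10}$, and winding numbers of loops are invariant under homotopy of the map; there your contractibility argument for $\omega$-compatible complex structures does legitimately transfer the model computation $\det(e^{-i\pi t/2}I_n)^2=e^{-i\pi tn}$, giving total winding $-n$ over the full loop. So the argument should be restructured: discard the individual $-n/2$ shifts and the two displayed degree formulas, and keep only (i) the bookkeeping identity expressing $\deg(\tL_0,\tL_1;p)+\deg(\tL_1,\tL_0;p)$ as minus the total lift change around the loop, and (ii) the computation of that winding number as $-n$ via the $\bC^n$-model together with homotopy invariance. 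In that form your proof coincides with the paper's, with step (ii) supplying a justification the paper leaves implicit.
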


\begin{proof} Let $\lambda_+: [0,1]\to LGr(T_p X)$ be a canonical short path
from $T_pL_0$ to $T_p L_1$, and let $\lambda_-: [0,1]\to LGr(T_pX)$ be a canonical short path
from $T_p L_1$ to $T_p L_0$. Let $\widetilde{\alpha_{\lambda_\pm}}: [0,1]\to \bR$  be the unique path such that 
\begin{equation}\label{eqn:lambda}
e^{2\pi i \widetilde{\alpha_{\lambda_\pm}}} = \alpha_{\lambda_\pm},\quad
\widetilde{\alpha_{\lambda_+}}(0) = \widetilde{\alpha_{L_0}}(p),\quad \widetilde{\alpha_{\lambda_-}}(0)= \widetilde{\alpha_{L_1}}(p).
\end{equation}
Then 
\begin{equation}\label{eqn:minus-n}
\widetilde{\alpha_{\lambda_+}}(1) -\widetilde{\alpha_{\lambda_+}}(0)
+ \widetilde{\alpha_{\lambda_-}}(1) -\widetilde{\alpha_{\lambda_-}}(0) =-n,
\end{equation}
and
\begin{equation}\label{eqn:deg+deg}
\deg(\tL_0, \tL_1; p) + \deg(\tL_1, \tL_0; p)
= \widetilde{\alpha_{L_1}}(p) - \widetilde{\alpha_{\lambda_+}}(1)
+ \widetilde{\alpha_{L_0}}(p) - \widetilde{\alpha_{\lambda_-}}(1)
\end{equation}
The lemma follows from \eqref{eqn:lambda}, \eqref{eqn:minus-n}, and \eqref{eqn:deg+deg}.
\end{proof}

\begin{remark}
Lemma \ref{A-model-Serre} implies Lemma \ref{Poincare} which is the A-model version of Serre duality. It is the reason why the canonical short path rotates clockwise instead of counterclockwise. 
\end{remark}

\begin{example}[linear Lagrangians in a flat symplectic torus]\label{ex:grading_torus}
For any integer $k\in \bZ$, let 
$$
\ell_k =\{ \theta = -kr\} \subset T^{2n} = \bR^{2n}/\bZ^{2n}
$$
where $\theta=(\theta_1,\ldots, \theta_n)$ and $r=(r_1,\ldots, r_n)$ are the coordinates
on $\bR^{2n}$.  Let 
$$
\omega =\sum_{j=1}^n dr_j\wedge d\theta_j. 
$$
Then $\ell_k$ is a Lagrangian submanifold of $(T^{2n}, \omega)$.

Let $\phi_k = \displaystyle{ \frac{1}{\pi}\arctan(k) \in (-\frac{1}{2}, \frac{1}{2}) }$. Then 
$$
\cos(\pi \phi_k) = \frac{1}{\sqrt{1+k^2}}, \quad \sin(\pi \phi_k) =\frac{k}{\sqrt{1+k^2}}.
$$
Note that if $k_0<k_1$ then $\phi_{k_0}< \phi_{k_1}$, and that  $\phi_{-k} = -\phi_k$. In particular, 
$\phi_0=0$.

The Lagrangian Grassmannian bundle $ LGr:= LGr(T^{2n},\omega)$ is trivial. 
$\alpha_\Theta: LGr\to U(1)$ is given by 
$$
\alpha_\Theta: LGr= T^{2n}\times U(n)/O(n) \lra U(1),
\quad (x, aO(n)) \mapsto \det(a)^2,
$$
and $s_{\ell_k}: \ell_k\to LGr$ is given by 
$$
s_{\ell_k}: \ell_k \to LGr = T^{2n}\times U(n)/O(n), \quad
p\to (p, e^{-\pi i\phi_k}I_n O(n)). 
$$
Therefore, 
$$
\alpha_{\ell_k} =\alpha_\Theta \circ s_{\ell_k}: \ell_k\to U(1)
$$
is the constant map to $e^{-2\pi i n\phi_k}$.  We equip $\ell_k$ with the grading
$\widetilde{\alpha_{\ell_k}}: \ell_k \to \bR$ which is the constant map to $-n\phi_k$.  

Let $k_0, k_1 \in \bZ$, $k_0\neq k_1$. The $\ell_{k_0}$ and $\ell_{k_1}$ intersect transversally.
For any $p\in \ell_{k_0}\cap \ell_{k_1}$, the canonical short path from $T_p\ell_{k_0}$ to $T_p\ell_{k_1}$ is given by 
$$
\lambda(t) =\begin {cases}
e^{-\pi i  (\phi_{k_1}-\phi_{k_0}) t} T_p \ell_{k_0} & \text{if } k_1>k_0,\\
e^{-\pi i (1+\phi_{k_1}- \phi_{k_0}) t} T_p \ell_{k_0} & \text{if }k_1<k_0.
\end{cases}
$$
So $\alpha_\lambda = \alpha_\Theta \circ \lambda: [0,1]\to U(1)$ is given by 
$$
\alpha_{\lambda}(t) = \begin{cases}
e^{-2\pi i n (\phi_{k_1}-\phi_{k_0}) t} e^{-2\pi i n\phi_{k_0}} & \text{if } k_1>k_0,\\
e^{-2\pi i n (1+\phi_{k_1}-\phi_{k_0}) t} e^{-2\pi i n \phi_{k_0}} & \text{if }k_1<k_0.
\end{cases}
$$
Let $\widetilde{\alpha_{\lambda}}: [0,1]\to \bR$ be the unique lift of $\alpha_{\lambda}$ with
$\widetilde{\alpha_{\lambda}}(0) =  \widetilde{ \alpha_{\ell_{k_0}}}(p) = -n\phi_{k_0}$. Then 
$$
\widetilde{\alpha_{\lambda}}(t) = \begin{cases}
-n  (\phi_{k_1}-\phi_{k_0}) t - n\phi_{k_0} = -n\left( t\phi_{k_1} + (1-t)\phi_{k_0}\right) & 
\text{if } k_1>k_0, \\
-n (1+\phi_{k_1}-\phi_{k_0}) t - n\phi_{k_0} =-n \left( t(1+\phi_{k_1}) + (1-t)\phi_{k_0} \right)  & 
\text{if } k_1<k_0.
\end{cases}
$$
$$
\deg\left(  ( \ell_{k_0},-n\phi_{k_0}), ( \ell_{k_1}, -n\phi_{k_1}); p \right) = \widetilde{\alpha_{\ell_{k_1}}}(p) -\widetilde{\alpha_{\lambda}}(1) = -n\phi_{k_1} -\widetilde{\alpha_{\lambda}}(1) =
\begin{cases}
0 &\text{if } k_1>k_0,\\
n & \text{if }k_1<k_0.
\end{cases}
$$

\end{example}

\subsection{Fukaya category with $\bZ$-grading}
In the remainder of this paper, we consider $\uX = (X,\omega_{\bC} = B + i\omega,  J, \Theta)$ 
where $\omega$ is a symplectic structure, $B$ is a closed real 2-form, $J$ is an $\omega$-compatible almost complex structure,
and  $\Theta$ is a unitary frame of the Hermitian line bundle $\det(T_{\bC}X)^{\otimes 2}$. An object in 
the Fukaya category $\Fuk(\uX)$ of $\uX$ is 
$$
\hat{L} = (L, \widetilde{\alpha_L}, \cL, \nabla)
$$
where $(L, \cL, \nabla)$ is an object in $\Fuk(X,\omega_\bC)$ and $\widetilde{\alpha_L}: L\to \bR$ is a grading, so that the pair $(L, \widetilde{\alpha_L})$ is a graded
Lagrangian in $(X, \omega, J, \Theta)$. Given two objects
$$
\hL_0=(L_0, \widetilde{\alpha_{L_0} }, \cL_0, \nabla_0),
\quad \hL_1 = (L_1, \widetilde{\alpha_{L_1}}, \cL_1, \nabla_1)
$$
of $\Fuk(\uX)$, define
$$
CF^i(\hL_0, \hL_1) =\bigoplus_{\substack{ p\in L_0\cap L_1\\
\deg(\tL_0, \tL_1;p)=i} } \Hom_{\bC}( (\cL_0)_p, (\cL_1)_p),
$$
where $\tL_j = (L_j, \widetilde{\alpha_{L_j}})$.
Define
$$
\Hom_{\Fuk(\uX)}(\hL_0, \hL_1) := CF^*(\hL_0, \hL_1) = \bigoplus_i CF^i(\hL_0, \hL_1)
$$
which is $\bZ$-graded vector space over $\bC$. By \eqref{eqn:duality-p} and Lemma \ref{A-model-Serre}, we have the following graded version of \eqref{eqn:duality-CF}.
\begin{lemma} \label{Poincare}
$CF^i(\hL_1, \hL_0) = CF^{n-i}(\hL_0, \hL_1)^\vee$.
\end{lemma}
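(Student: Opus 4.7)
The plan is to unravel the definition of $CF^i(\hL_1, \hL_0)$ and then apply the two facts stated earlier in the paper as a direct matching of summands. Specifically, expanding the definition gives
$$
CF^i(\hL_1, \hL_0) = \bigoplus_{\substack{p\in L_0\cap L_1 \\ \deg(\tL_1,\tL_0;p)=i}} \Hom_\bC((\cL_1)_p,(\cL_0)_p),
$$
so I need to reindex the direct sum and identify the fiberwise $\Hom$ spaces with their duals.

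For the indexing set, Lemma \ref{A-model-Serre} says $\deg(\tL_0,\tL_1;p)+\deg(\tL_1,\tL_0;p)=n$ for every $p\in L_0\cap L_1$. Hence the condition $\deg(\tL_1,\tL_0;p)=i$ is equivalent to $\deg(\tL_0,\tL_1;p)=n-i$, and the set of intersection points contributing to $CF^i(\hL_1,\hL_0)$ is exactly the set contributing to $CF^{n-i}(\hL_0,\hL_1)$. For the summands, equation \eqref{eqn:duality-p} provides a canonical isomorphism $\Hom_\bC((\cL_1)_p,(\cL_0)_p)\cong \Hom_\bC((\cL_0)_p,(\cL_1)_p)^\vee$ at each such $p$ (given by the natural pairing $\psi\otimes\varphi\mapsto \mathrm{tr}(\psi\circ\varphi)$ on one-dimensional lines).

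Combining the two, the direct sum on the right becomes
$$
\bigoplus_{\substack{p\in L_0\cap L_1 \\ \deg(\tL_0,\tL_1;p)=n-i}} \Hom_\bC((\cL_0)_p,(\cL_1)_p)^\vee,
$$
which is precisely $CF^{n-i}(\hL_0,\hL_1)^\vee$ since the dual of a finite direct sum is the direct sum of duals. This finishes the identification.

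There is no real obstacle here: once Lemma \ref{A-model-Serre} is in hand, the statement is a bookkeeping consequence of that degree-flip together with the fiberwise duality \eqref{eqn:duality-CF}. The only thing worth being careful about is that the sum is finite (which holds since $L_0\pitchfork L_1$ at finitely many points), so that passing to duals commutes with the direct sum without issue.
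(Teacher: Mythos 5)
Your proof is correct and follows exactly the route the paper intends: the paper's one-line justification "By \eqref{eqn:duality-p} and Lemma \ref{A-model-Serre}" is precisely your combination of the fiberwise duality with the degree-flip $\deg(\tL_0,\tL_1;p)+\deg(\tL_1,\tL_0;p)=n$, and your care about finiteness of the sum is appropriate. (One trivial slip: in your closing paragraph the fiberwise duality is \eqref{eqn:duality-p}, not \eqref{eqn:duality-CF}, which is the ungraded consequence.)
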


\section{Products}\label{subsec:products}

Let
$$
\hat{L}_j = (L_j,  \widetilde{\alpha_{L_j}}, \cL_j,  \nabla_j), \quad 0\leq j\leq k
$$
be $k+1$ objects in $\Fuk(\uX)$ such that for $j=0,\ldots,k$,
$L_j$ and $L_{j+1}$ intersect transversally at finite many points, where $L_{k+1}=L_0$.
In the remainder of Section \ref{subsec:products}, we fix $(\hat{L}_0,\ldots, \hat{L}_k)$ and
$(p_0,\ldots, p_k)$, where $p_0\in L_k\cap L_0$ and $p_j \in L_{j-1}\cap L_j$ for $j=1,\ldots,k$. 

In this section we introduce the $A_\infty$ product $\mu^k:CF(\hat L_{k-1}, \hat L_k)\otimes \cdots \otimes CF(\hat L_0, \hat L_1)\to CF(\hat L_0, \hat L_k)
$, which has inputs $\rho_j\in \Hom_\bC((\cL_{j-1})_{p_j},(\cL_j)_{p_j})$ for $j=1,\ldots, k$.   The map $\mu^k$ is given by the count of $J$-holomorphic discs $u$ with boundaries on the Lagrangians $L_0,\ldots, L_k$ and corners at $p_0, \ldots, p_k$ ($p_0$ is a generator of the output), weighted by $\rho(\rho_k,\ldots, \rho_1; u)\in \Hom_\bC((\cL_{0})_{p_0},(\cL_k)_{p_0})$. 
  The weight $\rho(\rho_k,\ldots, \rho_1; u)$ depends on the inputs $\rho_j$'s, the complexified symplectic area of $u$ with respect to $\omega_\bC$, and the holonomy of the connection along $\partial u$  which depends on $B$; more explicitly see Definition \ref{def: output}.  In Lemma \ref{rho}, we show that this weight only depends on the homotopy class of $u$ relative to the boundary conditions.  Thus, $\mu^k$ as precisely stated in Definition \ref{def:structure maps} is a well defined count over the homotopy classes $[u]$.

Recall that a triple of topological spaces $(X,A,B)$ consists of a topological space $X$ and two subspaces
$A, B$ with $B\subset A\subset X$. A continuous map of triples $f:(X,A,B)\to (Y, G, H)$ is a continuous map $f:X\to Y$ such 
that $f(A)\subset G$ and $f(B)\subset H$.  

\begin{definition}\label{smooth-disks} 
Let $\cD(X, (L_0,\ldots, L_k), (p_0,\ldots,p_k))$ be the set of continuous maps of triples
$$
u: \Big(\bD, \partial \bD =\bigcup_{j=0}^k \partial_j\bD, \{ z_0,\ldots, z_k\} \Big) \lra \Big(X, \bigcup_{j=0}^k L_j , \{ p_0,\ldots,p_k\} \Big)  
$$
satisfying the following properties. 
\begin{enumerate} 
\item[D1.] $\bD$ is the unit disk in $\bC$, oriented by the complex structure. This determines
an orientation on its boundary $\partial \bD \simeq S^1$. 
\item[D2.] $z_0, z_1,\ldots, z_k$ are distinct points on  the boundary $\partial \bD$ which respect the cyclic order determined by the orientation on $\partial \bD$.
\item[D3.]  For $j=0,\ldots, k$, $\partial_j \bD \subset  \partial \bD$ is the arc  between $z_j$ and $z_{j+1}$, where $z_{k+1}=z_0$. 
 \item[D4.] $u: \bD \to X$ is a piecewise smooth continuous map. For $j=0,\ldots,k$, 
$u(\partial_j \bD) \subset L_j$ and $u(z_j) = p_j$.
\item[D5.] $\partial_j u := u|_{\partial_j \bD} : \partial_j \bD \simeq [0,1] \to L_j$ is a piecewise smooth path from $p_j$ to $p_{j+1}$, where $p_{k+1}=p_0$. 
\end{enumerate} 
\end{definition}

\begin{definition} \label{smooth-homotopy} 
A {\em homotopy} between two maps $u, u'$ in $\cD(X, (L_0,\ldots, L_k), (p_0,\ldots, p_k))$ is a continuous map of triples
$$
h: \Big(\bD, \partial \bD =\bigcup_{j=0}^k \partial_j\bD , \{ z_0,\ldots, z_k\} \Big) \times[0,1]  \lra \Big(X, \bigcup_{j=0}^k L_j , \{ p_0,\ldots,p_k\} \Big)  
$$
satisfying the following conditions. 
\begin{enumerate}
\item[H1.] For any $t\in [0,1]$, define  $h_t:\bD\lra X$ by $h_t(z)= h(z,t)$.  Then
$h_t$ is a map in $\cD(X,(L_0,\ldots,L_k), (z_0,\ldots,z_k))$, $h_0=u$, and $h_1=u'$. 
\item[H2.] $h$ is a piecewise smooth continuous map.
\item[H3.]  The map $\partial_j h= h|_{\partial_j \bD \times [0,1]}: \partial_j \bD \times [0,1] \simeq [0,1]\times [0,1] \to L_j$ is piecewise smooth.   
\end{enumerate} 
We say two maps in $\cD(X, (L_0,\ldots, L_k), (z_0,\ldots, z_k))$ are  {\em homotopic} if there exists a homotopy between them.
\end{definition}

Note that in H3, $\partial_j h$ is a fixed-end-point homotopy between the two paths $\partial_j u$ and $\partial_j u'$ in $L_j$
 from $p_j$ to $p_{j+1}$.

\begin{definition}[$J$-holomorphic]
Let $J$ be an $\omega$-compatible almost complex structure on $X$. Let 
$$
u: \Big(\bD, \partial \bD =\bigcup_{j=0}^k \partial_j\bD, \{ z_0,\ldots, z_k\} \Big) \lra \Big(X, \bigcup_{j=0}^k L_j , \{ p_0,\ldots,p_k\} \Big)  
$$
be a map in $\cD(X, (L_0,\ldots, L_k), (p_0,\ldots,p_k))$. We say $u$ is $J$-holomorphic if
$u$ is smooth on $\bD-\{z_0,\ldots, z_k\}$, and 
\begin{equation}\label{eqn:J-hol}
J\circ du =du \circ \bj, 
\end{equation}
where $\bj$ is the standard complex structure on $\bD$.
\end{definition}
\begin{remark}
Define
\begin{equation} \label{eqn:dbar-J}
\bar{\partial}_J(u) := \frac{1}{2}(du + J\circ du \circ \bj).
\end{equation}
Then Equation \eqref{eqn:J-hol} is equivalent to 
$$
\bar{\partial}_J(u)=0.
$$
\end{remark}

Let   
\begin{equation}
\cM_J:= \cM(\uX, (\hL_0,\ldots, \hL_k), (p_0,\ldots, p_k); [u], J)
\end{equation}
be moduli of $J$-holomorphic maps in  $\cD(X, (L_0,\ldots, L_k), (p_0,\ldots, p_k)$  in the homotopy class $[u]$. 

For $k\geq 2$, there is a forgetful map
\begin{equation}\label{eqn:forget}
\mathrm{Forget}: \cM(\uX, (\hL_0, \ldots, \hL_k), (p_0, \ldots, p_k); [u], J) \lra \fT_{k+1}
\end{equation}
sending a $J$-holomorphic map to its domain, where 
$$
\fT_{k+1} = \left.\left\{ 
(z_0,z_1,\ldots, z_k) \in (\partial \bD)^{k+1} \Big|
\begin{array}{l}
z_i\neq z_j \text{ if }i\neq j, \quad z_0, z_1,\ldots, z_k \text{ respect} \\
\text{the  counterclockwise cyclic order of }\partial\bD
\end{array} \right\} \right/\sim.
$$
$\fT_{k+1}$ is a smooth manifold of dimension $k-2$ and is homeomorphic to $\bR^{k-2}$
\cite[Lemma 1.3]{fuk_oh}.  

Let $\tL_j = (L_j, \widetilde{\alpha_{L_j}})$.
The expected dimension of the fiber of the forgetful map \eqref{eqn:forget} is 
\begin{equation} \label{eqn:index-u}
\mathrm{ind}([u]) = \deg(\tL_0, \tL_k,p_0) -\sum_{j=1}^k \deg(\tL_{j-1}, \tL_j; p_j). 
\end{equation}
(See e.g. \cite[Section 2.2]{fuk_intro}.) 
Therefore, the expected dimension of $\cM_J$ is
$$
\mathrm{ind}([u]) + \dim_{\bR}(\fT_{k+1}) =  \deg( \tL_0, \tL_k, p_0) - \sum_{j=1}^k \deg( \tL_{j-1}, \tL_j ; p_j)  + k-2.
$$
Note that the right hand side does not depend on $[u]$. 

Given positive integers $l, p$ where $lp>2$, let  
$$
\cW^{l,p} :=\cW^{l,p}(X, (L_0,\ldots, L_k), (p_0,\ldots, p_k); [u])
$$
be the moduli of continuous maps of triples
$$
u: \Big(\bD, \partial \bD =\bigcup_{j=0}^k \partial_j\bD, \{ z_0,\ldots, z_k\} \Big) \lra \Big(X, \bigcup_{j=0}^k L_j , \{ p_0,\ldots,p_k\} \Big)  
$$
in a the homotopy class $[u]$ satisfying D1, D2, D3 in Definition \ref{smooth-disks} and
\begin{enumerate}
\item[D4'] $u: \bD \to X$ is of class $W^{l,p}$ for $j=0,\ldots,k$, 
$u(\partial_j \bD) \subset L_j$ and $u(z_j) = p_j$.
\item[D5'] $\partial_j u := u|_{\partial_j \bD} : \partial_j \bD \simeq [0,1] \to L_j$ is a path from $p_j$ to $p_{j+1}$, where $p_{k+1}=p_0$. 
\end{enumerate}
For any $\omega$-compatible almost complex structure $J$, $\overline{\dd}_J$ is a section of a Banach bundle
$\cE^{l-1,p}$ where 
$$
\cE^{l-1,p}_{z_0,\ldots, z_k, u}=W^{l-1,p}(\bD, u^*TX\otimes \Lambda^{0,1}_\bD).
$$
and $\cM_J = \overline{\dd}_J^{-1}(0)\subset \cW^{l,p}$. If the linearization of $\overline{\dd}_J$ is surjective for all $(z_0,\ldots,z_k,u)\in  \mathcal{M}_J$, then $J$ is called \underline{regular}.
\begin{theorem} 
If $J$ is regular, then $\mathcal{M}_J$ is a smooth manifold of dimension 
$$
\deg( \tL_0, \tL_k, p_0) - \sum_{j=1}^k \deg( \tL_{j-1}, \tL_j ; p_j)  + k-2
$$
equipped with an orientation determined by the orientations and spin structures of
$L_0,\ldots, L_k$. 
\end{theorem}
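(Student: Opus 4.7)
The plan is to realize $\cM_J$ as the zero locus of the smooth section $\overline{\dd}_J$ of the Banach bundle $\cE^{l-1,p}\to \cW^{l,p}$ and apply the implicit function theorem. Since $J$ is regular, $D_u\overline{\dd}_J$ is surjective at every $(z_0,\ldots,z_k,u)\in \cM_J$, and its kernel is finite dimensional because $D_u\overline{\dd}_J$ is Fredholm (a zeroth-order perturbation of a standard Cauchy-Riemann-type operator with totally real boundary conditions). The implicit function theorem on Banach manifolds therefore gives $\cM_J$ the structure of a smooth finite-dimensional submanifold of $\cW^{l,p}$ with dimension equal to $\mathrm{index}(D_u\overline{\dd}_J)$. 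Elliptic regularity (bootstrapping $\overline{\dd}_J u=0$) shows that every element of $\cM_J$ is smooth away from the corners $z_0,\ldots,z_k$, so the smooth manifold structure coincides with the one implicit in Definition \ref{smooth-disks}. Since the marked points $(z_0,\ldots,z_k)\in \fT_{k+1}$ are allowed to vary, the manifold structure absorbs the factor $\fT_{k+1}$ as well.

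To identify the dimension with the formula in the statement, I would compute $\mathrm{index}(D_u\overline{\dd}_J)$ via the Riemann-Roch theorem for Cauchy-Riemann operators with totally real boundary conditions on a disc. The boundary totally real subbundle $u^*TL_j$ jumps at each corner $p_j$, so the standard procedure is to desingularize the boundary condition by splicing in the canonical short path $\lambda_j$ from $T_{p_j}L_{j-1}$ to $T_{p_j}L_j$ (as in Section \ref{sec:Z-grading-details}). The Riemann-Roch formula then gives $\mathrm{index}(D_u\overline{\dd}_J)= n\chi(\bD)+\mu([u])=n+\mu([u])$, where $\mu([u])$ is the Maslov index of the resulting loop of Lagrangian subspaces of $\bC^n$. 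The trivialization $\Theta$ of $\det(T_\bC X)^{\otimes 2}$ converts this Maslov index into the sum of the change of $\widetilde{\alpha}$ along each arc $\partial_j u$ and a corner contribution at each $p_j$. By definition of $\deg(\tL_{j-1},\tL_j;p_j)$ via the canonical short path, these contributions assemble exactly into $\mathrm{ind}([u])=\deg(\tL_0,\tL_k;p_0)-\sum_{j=1}^k\deg(\tL_{j-1},\tL_j;p_j)$ as recorded in \eqref{eqn:index-u}. Adding $\dim\fT_{k+1}=k-2$ yields the claimed dimension.

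For the orientation, I would follow the Fukaya-Oh-Ohta-Ono framework: on a Riemann surface with totally real boundary, oriented and (relatively) spin boundary Lagrangians induce a canonical orientation on the determinant line of the linearized Cauchy-Riemann operator. The spin structures on $L_0,\ldots,L_k$ trivialize $TL_j$ along each arc $\partial_j u$ up to homotopy, and one orients $\ker D_u\overline{\dd}_J$ (which equals $T_u\cM_J$ at a regular point) by comparing this trivialization with a fixed choice on the model disc. The pinwheel gluing argument at each corner ensures this orientation is independent of the choice of trivialization, as long as the canonical short paths are used consistently. Combining with the canonical orientation of $\fT_{k+1}\cong \bR^{k-2}$ yields an orientation on $\cM_J$.

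The main obstacle will be the careful bookkeeping in the Maslov-index computation at the corners: matching the Riemann-Roch corner contribution with the grading-theoretic quantity $\deg(\tL_{j-1},\tL_j;p_j)$ defined via canonical short paths (and not, for example, off by a sign or by $n$) requires going through the same clockwise convention on $\lambda$ that was used in Section \ref{sec:Z-grading-details} and in Lemma \ref{A-model-Serre}. A close second in technicality is verifying that the spin-structure-induced orientation on $\det D_u\overline{\dd}_J$ is compatible across the corners; all the remaining steps are standard Banach manifold and elliptic regularity arguments.
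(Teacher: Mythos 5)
The paper does not actually prove this theorem: it only sets up the Fredholm framework ($\cM_J=\overline{\dd}_J{}^{-1}(0)\subset\cW^{l,p}$, regularity $=$ surjectivity of the linearization) and defers the proof to the standard references (Fukaya--Oh, McDuff--Salamon, FOOO, etc.). Your sketch --- implicit function theorem on the Banach bundle, Riemann--Roch with totally real boundary conditions and canonical-short-path splicing at the corners to match the index with $\deg(\tL_0,\tL_k;p_0)-\sum_j\deg(\tL_{j-1},\tL_j;p_j)$, plus $\dim\fT_{k+1}=k-2$, and spin-structure orientations in the FOOO style --- is exactly the argument those references carry out, so it is consistent with the paper's treatment and correct in outline at the same level of detail the paper itself relies on.
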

In order to use this theorem, one needs existence of regular $J$ by showing they are dense in the set of all $J$ in the setting considered, hence the use of the word ``generic" for regular $J$.  The proof depends on the symplectic manifold and moduli space in question; it can be proven by examining the possible configurations of $u$ and solving a Fredholm problem with larger spaces in which both $(z_0,\ldots,z_k, u)$ and $J$ vary. Examples include \cite{fuk_oh}, \cite[ Theorem 3.1.6, Appendix C.4 p 604]{mcduff2012j} (for pseudo-holomorphic spheres), \cite[Equation (2.5)]{fuk_intro}, \cite{FloerHoferSalamon} (for Hamiltonian Floer theory), and \cite[p63]{Ca} (for discs and discs attached to spheres).

\color{black}

\subsection{Weights in structure maps}\label{sec: rel_Stokes} 

A pseudo-holomorphic disc $u$ with a single Lagrangian boundary is counted in a Fukaya category structure map with weight $\rho(u)$,  which is precisely a pairing in relative cohomology:
$$
\rho(u) \equiv \exp(2\pi i \langle (B+i\omega,\theta_\nabla),u \rangle)
$$
where $\theta_\nabla$ is the connection 1-form associated to the connection $\nabla$. In particular, $\rho(u)$ is well-defined on relative homology, and we can track how it changes when moving a Lagrangian in Theorem \ref{prop:polygon-area} as well as how the structure map changes in Theorem \ref{thm:str_map_isotopy}. Note that rather than viewing the B-field $B$ as an element of $H^2(X;\bR)/H^2(X;\bZ)$ as one does for closed string invariants, for open string invariants of discs with Lagrangian boundary conditions, the B-field should be thought of as a pair $(B,\theta)\in H^2(X,L;\bR)$, where $\theta$ is a 1-form on the Lagrangian $L$. Then we require the pair $(B,\theta)$ to be closed as a relative de Rham form, which leads to the condition $B|_L = d\theta$.

Let $L\subset X$ be a Lagrangian of a symplectic manifold $(X,\omega)$. Denote $u: (\bD, \dd \bD) \to (X, L)$ a $J$-holomorphic disc for an almost complex structure $J$, and $\beta = [u(\bD, \dd \bD)]\in H_2(X,L; \bZ)$ a fixed relative homology class. The energy of the disc $E(u) = \int_\bD |du|_{J,\omega}^2$ equals its area $\int_\bD u^* \omega$ for $J$-holomorphic discs. We can replace $\omega$ with $\omega + iB$ where $B$ is any closed 2-form; it is therefore not necessarily non-degenerate nor vanishing on $L$. A connection 1-form is part of the data of a relative cohomology class, so that when $B=0$ this is a local system or flat connection 1-form. Associated to the disc we have a weight given by a pairing of \textit{relative} cohomology. 

The disc weight is independent of homology class by Stokes' theorem $\int_{\dd P} u^*\omega=0$ for a 3-cycle $P$ because $d \omega=0$. In other words, $\int_{u(S)} \omega = \int_{u'(S)} \omega$ when $[u] = [u'] \in H_2(X)$. This still holds if we only have $[u] = [u'] \in H_2(X,L)$, provided $L$ is Lagrangian: $[u]=[u']\in H_2(X,L)$ implies $\int_{u(S)}\omega-\int_{u'(S)}\omega =0$ because $\omega|_L=0$. More generally, given a $k$-form $\omega \in \Omega^k(X)$, and a $C^\infty$ map $f: (S, \partial S) \to (X, L)$ representing a relative homology class in $H_k(X,L;\bZ)$, the integral $\int_S  f^*\omega$ depends only on the class $[f]\in H_k(X,L;\bZ)$ if and only if $\omega$ is closed and $L$ is Lagrangian.

\begin{lemma}[Stokes' theorem for relative homology]\label{lem:Stokes_Lagr} Let $j: L \to X$ be inclusion.
 $\int_S  f^*\omega$ depends only on the class $[f]\in H_k(X,L;\bZ)$ if and only if $d\omega =0 \textup{ and } j^*\omega =0$.
\end{lemma}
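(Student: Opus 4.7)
The plan is to prove both implications by Stokes' theorem on smooth singular chains, after realizing relative homology relations at the chain level.

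For the ``if'' direction, suppose $d\omega = 0$ and $j^*\omega = 0$, and let $f: (S, \partial S) \to (X,L)$ and $f': (S', \partial S') \to (X,L)$ be two smooth representatives of the same class in $H_k(X,L;\bZ)$. I would first realize this equality at the chain level: there exist a smooth $(k+1)$-chain $P$ in $X$ and a smooth $k$-chain $T$ in $L$ with $\partial P = f_*[S] - f'_*[S'] + j_* T$. Applying the ordinary Stokes' theorem simplex by simplex and summing gives
$$
0 = \int_P d\omega = \int_{\partial P} \omega = \int_S f^*\omega - \int_{S'} (f')^*\omega + \int_T j^*\omega = \int_S f^*\omega - \int_{S'} (f')^*\omega,
$$
where the first equality uses $d\omega = 0$ and the last uses $j^*\omega = 0$; this gives the desired independence on the representative.

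For the ``only if'' direction I would isolate each condition with specific test maps. To force $d\omega = 0$, take any smooth $g: D^{k+1} \to X$ and set $f := g|_{\partial D^{k+1}}$, viewed as a map $(S^k, \emptyset) \to (X, L)$. Since $g$ itself is a nullhomotopy of $f$ inside $X$, the class $[f]$ vanishes in $H_k(X,L;\bZ)$, so by hypothesis $\int_{S^k} f^*\omega = 0$; but Stokes gives $\int_{D^{k+1}} g^*(d\omega) = \int_{S^k} f^*\omega = 0$ for every smooth $g$, and letting $g$ parametrize arbitrarily small embedded balls in a coordinate chart forces $d\omega = 0$ pointwise. To force $j^*\omega = 0$, I would test against smooth maps $h : (D^k, \partial D^k) \to (L, L) \hookrightarrow (X, L)$. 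Any such $h$ represents the zero class in $H_k(X,L;\bZ)$, because the chain $h_*[D^k]$ already lies in $L$ and is therefore a relative boundary (take the ambient $(k+1)$-chain to be zero); hence the hypothesis yields $\int_{D^k} h^*(j^*\omega) = 0$ for all such $h$, and the same localization on charts of $L$ gives $j^*\omega = 0$.

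The main subtlety I expect is the first step of the ``if'' direction: translating the abstract equality $[f] = [f']$ in $H_k(X,L;\bZ)$ into an actual smooth chain-level equation $\partial P = f_*[S] - f'_*[S'] + j_* T$ to which Stokes' theorem applies. This is standard via the equivalence between smooth singular and ordinary singular homology of the pair $(X,L)$, and in the disk-counting applications later in the paper the maps $f, f'$ arise from disks related by an explicit smooth homotopy, so $P$ can be produced directly and $T$ comes from tracking the boundary of the homotopy along $L$. Once that translation is in hand, the remainder is a clean, bookkeeping-level application of Stokes' theorem.
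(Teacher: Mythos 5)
Your proof is correct and follows essentially the same route as the paper: realize the relative homology relation at the chain level, apply Stokes' theorem to get $\int_{S_1} f_1^*\omega - \int_{S_0} f_0^*\omega = \int_C F^*d\omega - \int_{S_2} f_2^* j^*\omega$, and for necessity test against small $(k+1)$-balls in $X$ and small $k$-balls in $L$. The only cosmetic difference is that you phrase the bounding relation in terms of smooth singular chains while the paper uses a manifold with corners, and your justification of why the test classes vanish in $H_k(X,L;\bZ)$ is spelled out slightly more explicitly.
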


\begin{proof}
Suppose that $f_0: (S_0, \pS_0)\to (X,L)$ and $f_1:(S_1,\pS_1)\to (X,L)$ are homologous relative $k$-cycles. By definition, this means there exists a piecewise $C^\infty$ map   $F: C \to X$ from a $(k+1)$-dimensional manifold with corners $C$, and there exists $S_2$ a $k$-dimensional manifold, such that 
$$
\partial C = S_1 -  S_0 + S_2,\quad F|_{S_0} = f_0, \quad F|_{S_1} = f_1, \quad F(S_2)\subset L,
$$
so that $f_2:=F|_{S_2}: S_2 \to L$ is a $k$-chain in $L$.  We have
$$
\int_C F^*d\omega = \int_C d (F^*\omega)  = \int_{S_1} f_1^*\omega - \int_{S_0} f_0^*\omega  + \int_{S_2} f_2^* j^*\omega,
$$
where we keep track of the fact that the image of $F|_{S_2}$ is in $L$, in the last term, to illustrate where the Lagrangian condition on $L$ will be relevant. Therefore,
\begin{equation}\label{eqn:C} 
\int_{S_1} f_1^*\omega - \int_{S_0} f_0^*\omega  =\int_C F^*d\omega   - \int_{S_2} f_2^* j^*\omega.
\end{equation}
A sufficient condition for this to be zero is that 
\begin{equation}\label{eqn:omega}
d\omega =0 \textup{ and } j^*\omega =0
\end{equation} 
which holds when $(X,\omega)$ is a symplectic manifold and $L$ is a Lagrangian submanifold.  Indeed,  the condition  \eqref{eqn:omega} is also necessary, if we want the right hand side of  \eqref{eqn:C}
to vanish for all $F$, such as for the following:
\begin{itemize}
\item  For any small $(k+1)$-ball $D$ disjoint from $L$ we must have $\int_D d\omega =0$. Therefore $d\omega=0$. 
\item For any small $k$-ball $D$ in $L$ we must have $\int_D j^*\omega=0$.  Therefore $j^*\omega=0$. 
\end{itemize}
\end{proof}

Now we extend the theory to relative cohomology. Recall as mentioned briefly in \cite[p 78]{BT82} that the relative de Rham complex of the pair $(X,L)$ is defined to be the following chain complex and differential
$$
\Omega^k(X,L) =\Omega^k(X) \oplus \Omega^{k-1}(L), \quad d(B,\theta) = (dB, j^*B-d\theta).
$$
\begin{example}
In particular, if $L$ is a Lagrangian submanifold of a symplectic manifold $(X,\omega)$ then
the pair $(\omega,0) \in \Omega^2(X)\oplus \Omega^1(L)$ is a 2-cocycle and represents a relative de Rham cohomology class in $H^2_{dR}(X,L)$. For the B-field, we are interested in the case of $\omega+iB \in \Omega^2(X)$, in which case the second component in $\Omega^1(L)$ will not be 0, as follows. 
\end{example} 

\begin{definition}[Relative cocycle, pairing in relative cohomology]\label{def:rel_pairing}
Suppose that  $(B, \theta) \in \Omega^k(X) \oplus \Omega^{k-1}(L)$ is a \textit{cocycle in the relative de Rham complex} $(\Omega^*(X,L),d)$, or equivalently,
$$
d B =0, \quad  j^*B = d\theta,
$$
and let $f:(S,\pS) \to (X,L)$ be a relative $k$-cycle. We define a \textit{pairing}
$$
\langle (B,\theta), f\rangle = \int_S f^*B -\int_{\pS} (\pf)^* \theta
$$
where $\pf := f|_{\partial S}:\partial S\to L$.  
\end{definition}

\begin{example} One example is when $\theta$ is the connection 1-form on a trivial line bundle on a Lagrangian $L$ with curvature $j^*B$. Note that $\theta$ is only defined on $L$ so we cannot write $d \theta = B$ away from $L$; hence we cannot apply Stokes' theorem to conclude that
$\int_S f^*B -  \int_{\pS}(\pf)^*\theta=0$.
\end{example}

\begin{lemma}[Stokes' theorem for relative cohomology]\label{lem:Stokes_Bfield} The pairing
$\langle (B,\theta), f\rangle$
depends only on the relative de Rham cohomology class $(B,\theta) \in H^k_{dR}(X,L)$ and the relative cohomology class $[f]\in H_k(X,L;\bZ)$.
\end{lemma}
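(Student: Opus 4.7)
The plan is to prove bilinearity of the pairing in a strong sense: independence of the representative in each of the two arguments. So I would split the proof into two steps, one for each slot.

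\textbf{Step 1 (invariance in the cochain slot).} First I would show that if $(B,\theta) = d(\alpha,\beta) = (d\alpha,\, j^*\alpha - d\beta)$ is an exact relative cocycle, where $(\alpha,\beta)\in \Omega^{k-1}(X)\oplus \Omega^{k-2}(L)$, then $\langle (B,\theta), f\rangle = 0$ for every relative cycle $f:(S,\partial S)\to (X,L)$. Writing out the pairing,
\[
\langle (d\alpha,\, j^*\alpha-d\beta),\,f\rangle \;=\; \int_S f^* d\alpha \;-\;\int_{\partial S}(\partial f)^*(j^*\alpha - d\beta),
\]
apply ordinary Stokes on $S$ to the first term, getting $\int_{\partial S}(\partial f)^*j^*\alpha$ since $j\circ \partial f = f|_{\partial S}$. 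This cancels the $j^*\alpha$ contribution. For the remaining $-\int_{\partial S}(\partial f)^*(-d\beta) = \int_{\partial S}d((\partial f)^*\beta)$, apply Stokes on $\partial S$, which vanishes because $\partial(\partial S)=0$.

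\textbf{Step 2 (invariance in the cycle slot).} For two homologous relative cycles $f_0, f_1$, I would use the same set-up as in Lemma \ref{lem:Stokes_Lagr}: a piecewise smooth $F:C\to X$ with $\partial C = S_1 - S_0 + S_2$, $F|_{S_i}=f_i$, and $F(S_2)\subset L$. The key identity from that proof reads
\[
\int_{S_1} f_1^*B - \int_{S_0} f_0^*B \;=\; \int_C F^*dB \;-\; \int_{S_2} f_2^* j^*B \;=\; -\int_{S_2} f_2^* d\theta,
\]
using $dB=0$ and $j^*B=d\theta$. Apply Stokes on $S_2$ to get $-\int_{\partial S_2} (\partial f_2)^*\theta$. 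Taking $\partial$ of the chain relation gives $\partial S_2 = \partial S_0 - \partial S_1$ (with matching maps), so
\[
-\int_{\partial S_2}(\partial f_2)^*\theta \;=\; -\int_{\partial S_0}(\partial f_0)^*\theta + \int_{\partial S_1}(\partial f_1)^*\theta,
\]
which is exactly the difference of the $\theta$ terms in $\langle (B,\theta),f_1\rangle - \langle (B,\theta),f_0\rangle$. Putting the pieces together, the difference of pairings collapses to zero.

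\textbf{Main obstacle.} The routine calculations are straightforward, but the part that requires care is the bookkeeping of orientations in Step 2, namely verifying that under the convention $\partial C = S_1 - S_0 + S_2$ the induced boundary of $S_2$ really decomposes as $\partial S_0 - \partial S_1$ with $F|_{\partial S_2}$ restricting consistently to $\partial f_0$ and $\partial f_1$. Once the signs line up, both the chain-level Stokes argument of Step 1 and the relative Stokes argument of Step 2 only invoke $dB=0$ and $j^*B=d\theta$, which is precisely the condition that $(B,\theta)$ is a relative cocycle, so no further hypothesis on $X$ or $L$ is used.
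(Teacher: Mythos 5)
Your proof is correct and follows essentially the same route as the paper's: the cycle-slot invariance uses the identical bounding chain $\partial C = S_1 - S_0 + S_2$ with $dB=0$, $j^*B=d\theta$, Stokes on $S_2$, and $\partial S_2 = \partial S_0 - \partial S_1$, while your cochain-slot step (an exact relative cocycle pairs to zero) is, by linearity, the same computation the paper performs by expanding $\langle (B,\theta)+d(A,\psi), f\rangle$ directly. The sign bookkeeping you flag as the main obstacle is handled in the paper exactly as you propose, via $\partial\partial C = 0$.
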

\begin{proof} We first show that it is independent of choice of $f$, using the set-up from the proof of Lemma \ref{lem:Stokes_Lagr}:
\begin{eqnarray*}
0 &=& \int_C F^*dB = \int_C d(F^*B) \\
&\substack{F(S_2) \subset L \\=}&  \int_{S_1} f_1^* B -\int_{S_0} f_0^* B + \int_{S_2} f_2^* d\theta \\
&\substack{\text{Stokes}\\ =}&  \int_{S_1} f_1^* B -\int_{S_0} f_0^*B  +\int_{\pS_2} (\pf_2)^*\theta \\
&\substack{\eqref{eq:homology_vary} \\ =}&   \int_{S_1}f_1^* B  - \int_{S_0} f_0^* B 
+ \int_{\pS_0} (\pf_0)^*\theta -\int_{\pS_1}(\pf_1)^*\theta \\
&=&  \langle (B,\theta), f_1\rangle -\langle (B,\theta),f_0\rangle.
\end{eqnarray*} 
where we used
\begin{equation}\label{eq:homology_vary}
\pS_1 -\pS_0 +\pS_2 = \partial \partial C=0. 
\end{equation}
We next show it is independent of choice of $(B,\theta)$: suppose that
$$
(B',\theta') = (B,\theta) + d(A,\psi) = (B+dA, \theta + j^*A-d\psi). 
$$
Then 
\begin{eqnarray*}
\langle (B',\theta'),f\rangle &=& \int_S f^*(B+dA) -\int_{\pS}(\pf)^* (\theta + j^*A -d\psi) \\
&\substack{\text{Stokes'}\\=}& \int_S f^* B + \int_{S} d(f^*A) -\int_{\pS}(\pf)^*\theta   -\int_{\pS} (\pf)^*j^*A  + \int_{\partial \pS} (\pf)^* \psi 
\end{eqnarray*}
where
$$
\int_S d(f^*A) = \int_{\pS} (\pf)^*j^*A, \quad \partial\pS =0
$$ 
so 
$$
\langle (B',\theta'), f \rangle = \langle (B,\theta), f \rangle. 
$$
\end{proof} 

Now we define the weight for $u$ with multiple Lagrangian boundaries.

\color{black}

\begin{definition}\label{def: output}
Given 
$$ 
\rho_j \in \Hom_{\bC}( (\cL_{j-1})_{p_j}, (\cL_j)_{p_j}), \quad  j=1,\ldots,k,
$$
and a map $u$ in $\cD(X,(L_0,\ldots, L_k), (p_0,\ldots, p_k))$, 
define $\rho(\rho_k,\ldots,\rho_1;u) \in  \Hom_{\bC}( (\cL_0)_{p_0}, (\cL_k)_{p_0} )$ by 
\begin{equation} \label{def:rho_for_curve}
\rho(\rho_k,\ldots,\rho_1;u) = 
e^{ 2\pi i  \int_{D^2}u^*\omega_\bC} \Hol_{\nabla_k} (\partial_k u)
\circ \rho_k \circ \Hol_{\nabla_{k-1}}(\partial_{k-1} u) 
\circ \rho_{k-1} \circ \cdots \circ \rho_1 \circ \Hol_{\nabla_0}(\partial_0 u) 
\end{equation} 
where $\Hol_{\nabla_j}(\partial_j u)$ is the parallel transport along the path $\partial_j u: \partial_j \bD \to L_j$ 
from $p_j$ to $p_{j+1}$ defined by the unitary connection $\nabla_j$ on the complex line bundle $\cL_j$ on $L_j$.
\end{definition}

\begin{lemma} \label{rho} Let $\rho_j \in \Hom_{\bC}( (\cL_{j-1})_{p_j}, (\cL_j)_{p_j})$, $j=1,\dots, k$. If 
$u, u' \in \cD(X, (L_0,\ldots, L_k), (p_0,p_1,\ldots,p_k) )$ are  homotopic  then  
$$
\rho(\rho_k,\ldots,\rho_1;u) = \rho(\rho_k,\ldots,\rho_1;u')
$$
\end{lemma}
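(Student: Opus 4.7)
My plan is to apply Stokes' theorem twice — once to the closed complex $2$-form $\omega_\bC$ on the $3$-chain $\bD \times [0,1]$ defined by the homotopy, and once via the standard curvature--holonomy relation on each Lagrangian line bundle $(\cL_j, \nabla_j)$ — and to observe that the two resulting scalar corrections cancel exactly, thanks to the normalization $F_{\nabla_j} = -2\pi i B|_{L_j}$. This makes precise the interpretation, discussed around Definition \ref{def:rel_pairing} and Lemma \ref{lem:Stokes_Bfield}, of the weight as an exponentiated pairing in relative de Rham cohomology.

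Concretely, I would take a homotopy $h: \bD \times [0,1] \to X$ from $u$ to $u'$ as in Definition \ref{smooth-homotopy}, set
$$
I_j := \int_{\partial_j \bD \times [0,1]} (\partial_j h)^* B \in \bR, \quad 0 \leq j \leq k,
$$
and apply Stokes' theorem to $\omega_\bC$ on $\bD \times [0,1]$. Using $\omega|_{L_j}=0$ so that $(\partial_j h)^* \omega_\bC = (\partial_j h)^* B$, this gives
$$
\int_\bD (u')^* \omega_\bC - \int_\bD u^* \omega_\bC = -\sum_{j=0}^k I_j,
$$
so that the exponential factor $\exp(2\pi i \int_\bD u^* \omega_\bC)$, when recomputed for $u'$, differs from that of $u$ by the multiplicative scalar $\exp(-2\pi i \sum_j I_j)$.

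For each $j$ the restriction $\partial_j h$ is a piecewise smooth $2$-chain in $L_j$ whose boundary is the loop formed by $\partial_j u$, $(\partial_j u')^{-1}$, and the constant segments at $p_j$ and $p_{j+1}$. The standard Stokes relation for the unitary connection $\nabla_j$ on the line bundle $\cL_j$ expresses the holonomy around this loop on $(\cL_j)_{p_j}$ as $\exp\bigl(-\int_{\partial_j h} F_{\nabla_j}\bigr) = \exp(2\pi i I_j)$, which I will rearrange into a scalar identity comparing $\Hol_{\nabla_j}(\partial_j u')$ with $\Hol_{\nabla_j}(\partial_j u)$. Since each holonomy of a line bundle is scalar multiplication after trivializing the fibers, these scalars commute past the linear maps $\rho_\ell$ and factor freely out of the composition defining $\rho$, contributing a single overall factor of the form $\exp(+2\pi i \sum_j I_j)$ to the ratio $\rho(u')/\rho(u)$. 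Combined with the first step, the two exponentials cancel and I conclude $\rho(\rho_k,\ldots,\rho_1;u)=\rho(\rho_k,\ldots,\rho_1;u')$. The main obstacle is the orientation bookkeeping: the induced orientation of $\partial_j \bD \times [0,1]$ as part of $\partial(\bD \times [0,1])$ and of $\partial_j h$ as a $2$-chain in $L_j$ must be tracked consistently so that the two sign-decorated exponentials are opposite. The curvature normalization $F_{\nabla_j} = -2\pi i B|_{L_j}$ is precisely the one that makes this cancellation work.
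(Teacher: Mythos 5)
Your proposal is correct and takes essentially the same route as the paper's proof: a homotopy $h$ on $\bD\times[0,1]$, Stokes' theorem for the closed form $\omega_\bC$ together with $\omega|_{L_j}=0$, and the curvature--holonomy relation with the normalization $F_{\nabla_j}=-2\pi i B|_{L_j}$, so that the boundary terms $I_j$ cancel between the area exponential and the holonomy factors. The only cosmetic difference is that you apply Stokes once to $\omega_\bC$ where the paper treats $\omega$ and $B$ separately.
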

\begin{proof} Let $h:\bD\times[0,1]\to X$ be a homotopy between $u$ and $u'$. 
$L_j$ is a Lagrangian submanifold of $(X,\omega)$,  so $\omega|_{L_j}=0$, which implies 
\begin{equation}
\int_{\partial_j \bD \times [0,1]}  (\partial_j h)^* (\omega|_{L_j}) =0.
\end{equation} 
The curvature $F_{\nabla_j}$  of  $\nabla_j$ satisfies $F_{\nabla_j} = -2\pi i B|_{L_j}$, so  
\begin{equation} \label{eqn:Hol-Hol} 
\Hol_{\nabla_j}(\partial_j u')\circ \Hol_{\nabla_j}(\partial_j u) ^{-1} =  \exp\left(2\pi i\int_{ \partial_j \bD \times [0,1]} (\partial_j h)^*(B|_{L_j} )\right)  \mathrm{Id}_{(\cL_j)_{p_j} } 
\end{equation} 

where $\mathrm{Id}_{(\cL_j)_{p_j} }: (\cL_j)_{p_j} \to  (\cL_j)_{p_j}$ is the identity map.  

Note that $d h^*\omega = h^*d\omega =0$ and $d h^*B = h^*dB=0$. By Stokes' theorem,
\begin{equation} \label{eqn:int-omega}
0  =  \int_{\bD\times [0,1]} d (h^*\omega) = \int_{\bD} h_1^*\omega    - \int_{\bD} h_0^*\omega + \sum_{j=0}^k \int_{\partial_j \bD\times [0,1]} (\partial_j h)^*(\omega|_{L_j}) 
= \int_{\bD} (u')^*\omega  -\int_{\bD} u^*\omega 
\end{equation}
and
\begin{equation} \label{eqn:int-B} 
\begin{aligned}
0  =&  \int_{\bD \times [0,1]} d (h^*B) = \int_{\bD} h_1^*B    - \int_{\bD} h_0^*B + \sum_{j=0}^k \int_{\partial_i \bD\times [0,1]} (\partial_j h)^*(B|_{L_j}) \\
=& \int_{\bD} (u')^*B  -\int_{\bD} u^*B + \sum_{j=0}^k \int_{\partial_j \bD \times [0,1]}(\partial_j h)^*(B|_{L_j}). 
\end{aligned} 
\end{equation} 
Then
\begin{eqnarray*} 
&& \rho(\rho_k,\ldots,\rho_1;u') \\
&=&  e^{ 2\pi i  \int_{\bD}{u'}^*\omega_\bC}\Hol_{\nabla_k} (\partial_k u')
\circ \rho_k \circ \Hol_{\nabla_{k-1}}(\partial_{k-1} u') 
\circ \rho_{k-1} \circ \cdots \circ \rho_1 \circ \Hol_{\nabla_0}(\partial_0 u') \\
&=  & e^{ 2\pi i  \int_{\bD}u^*\omega_{\bC}} \prod_{j=0}^k \exp\left(-2\pi i\int_{\partial_j \bD \times [0,1]} (\partial_j h)^*(B|_{L_j}) \right) \\ 
&& \cdot \Hol_{\nabla_k} (\partial_k u') \circ \rho_k \circ \Hol_{\nabla_{k-1}}(\partial_{k-1} u')  \circ \rho_{k-1} \circ \cdots \circ \rho_1 \circ \Hol_{\nabla_0}(\partial_0 u') \\
&=&  e^{ 2\pi i  \int_{\bD}u^*\omega_{\bC}} \Hol_{\nabla_k} (\partial_k u) \circ \rho_k \circ \Hol_{\nabla_{k-1}}(\partial_{k-1} u)  \circ \rho_{k-1} \circ \cdots \circ \rho_1 \circ \Hol_{\nabla_0}(\partial_0 u)\\
&=& \rho(\rho_k,\ldots,\rho_1;u) 
\end{eqnarray*} 
where the second equality follows from  \eqref{eqn:int-omega} and \eqref{eqn:int-B}, while the third equality follows from \eqref{eqn:Hol-Hol}. 
\end{proof}

By Lemma \ref{rho}, $\rho(\rho_k,\ldots, \rho_1;u)$ depends only on the homotopy class $[u]$ of $u$. 
Therefore, 
$$
\rho(\rho_k,\ldots, \rho_1;[u]):= \rho(\rho_k,\ldots, \rho_1; u)
$$
is well-defined.   

\begin{definition}\label{def:structure maps}
Define
$$
\mu^k: CF^{i_k}(\hL_{k-1}, \hL_k)\otimes \cdots \otimes CF^{i_1}(\hL_0, \hL_1)\lra CF^{2-k+ \sum_{j=1}^k i_j}(\hL_0, \hL_k)
$$
by 
\begin{equation} \label{eqn:mu-k}
\mu^k (\rho_k,\ldots, \rho_1) 
=\sum_{\substack{ p_0\in L_0\cap L_k \\ \deg(\tL_0, \tL_k; p_0) 
=2-k+\sum_{j=1}^k i_j \\ [u] } }
\sharp \cM(\uX, (\hL_0,\ldots, \hL_k), (p_0,\ldots, p_k);[u])
\rho(\rho_k,\ldots, \rho_1;[u]).
\end{equation}
where $\sharp \cM(\cdots)$ is the number of points in the 
zero dimensional  moduli space $\cM(\cdots)$.

\begin{remark}\label{rem:convergence}
The right hand side of \eqref{eqn:mu-k} is an infinite sum of complex numbers.  We assume that there is an open subset $U$ in the space of symplectic forms on $X$ such that the right hand side of \eqref{eqn:mu-k} converges absolutely when $\omega \in U$, so that we may work over $\bC$. \textcolor{black}{Some non-exact examples include when $L_i$ are affine linear Lagrangians in abelian varieties \cite{ACLLc} or when they are moment map fibers of compact semi-Fano toric manifolds or toric Calabi-Yau manifolds. When $\omega$ is exact or monotone, structure maps can also be defined over $\mathbb{C}$. In general, they are only defined over a formal power series $\mathbb{C}[[T]]$.}
\end{remark} 

In particular,  
$$
\mu^1: CF^i(\hL_0, \hL_1)\lra CF^{i+1}(\hL_0, \hL_1)
$$
is the Floer differential, and 
$$
\mu^2: CF^{i}(\hL_1,\hL_2)\times CF^j(\hL_0, \hL_1)
\to CF^{i+j}(\hL_0, \hL_2)
$$
is the Floer product. 
\end{definition}

\color{black}
\begin{remark}
    For U-shaped Lagrangians $L$ in a symplectic Landau-Ginzburg model \cite{ACLLe} with aspherical fibers, a single Lagrangian $L$ cannot bound a disc so $\pi_2(X,L)=0$. In general, we may have $\pi_2(X,L) \neq 0$. In that case, $(\mu^1)^2 \neq 0$ due to additional disc bubbling, and the $A_\infty$-category is \emph{curved}. One can then perturb the structure maps $\mu^k$ with elements $b$ in the degree 1 of a chain-level model of $CF^*(L,L)$. This amounts to adding insertions, additional boundary constraints, to the discs counted in $\mu^k$. Then the condition that the perturbed $\mu^k$ form an $A_\infty$-category amounts to $b$ being a \emph{Maurer-Cartan} element. This $b$ defines a \emph{bounding cochain} and the deformed differential now squares to 0. See \cite[\textsection 3]{fooo1} for details. This process can be generalized to more than one Lagrangian. In particular, adding a B-field can be interpreted as a \emph{bulk deformation} of the structure maps for all Lagragangian branes by the addition of $e^{2\pi i \int_{D^2}u^*B}$ in Equation \eqref{def:rho_for_curve}. 
\end{remark}

\color{black}

\section{Isotopy between objects}\label{sec: isotopy}

We first recall the definition of an isotopy on page 111 in \cite{Hirsch}.

\begin{definition}
Let $L$, $X$ be smooth manifolds. An {\em isotopy} of $L$ in $X$ is a homotopy
$$
\psi : L\times [0,1]\to X, \quad \psi(p,t) =\psi_t(p)
$$
such that the related map 
$$
\tilde{\psi}  : L\times [0,1]\to X\times [0,1], \quad (p,t)\mapsto (\psi_t(p), t)
$$
is a smooth embedding. In particular, for any $t\in [0,1]$ the map $\psi_t: L\to X$ is a smooth embedding and $\psi_t(L)$ is a smooth submanifold of $X$. 
We call $\tilde{\psi}$ the  {\em track} of $\psi$ and say $\psi$ is an isotopy from $\psi^0$ to $\psi^1$.
\end{definition} 

\begin{definition} [Lagrangian isotopy]\label{def:Lag_isotopy}
If $(X,\omega)$ is a symplectic manifold of dimension $2n$ and $L$ is a smooth manifold of dimension $n$, 
a {\em Lagrangian isotopy}  in $X$ is an isotopy $\psi: L\times [0,1]\to X$ such that for all $t\in [0,1]$, $\psi_t(L)$ is a Lagrangian submanifold.  
\end{definition}

If $\psi: L\times [0,1]\to X$ is a Lagrangian isotopy then
$$
\psi^*\omega = b \wedge dt, 
$$
where $b$ is a  real closed 1-form on $L\times [0,1]$.  In terms of the local coordinates $x=(x_1,\ldots, x_n)$ on $L$ and the global coordinate $t$ on $[0,1]$, $b$ is of the form $b = \sum_{j=1}^n a_j(x,t) dx_j$.  For each fixed $t$, $b_t$ is a closed 1-form on $L$.  Following P. Seidel \cite{Seidel_isotopy}, we say the Lagrangian isotopy $\psi$ is {\em exact} if $b\wedge dt = d(Hdt)$ for some smooth  function $H: L\times [0,1]\to \bR$.  Denote by $H_t$, where $H_t(p)=H(p,t)$, a time-dependent smooth function on $L$.   When $\psi$ is exact, $b_t=dH_t$.

Let $B$ be a B-field on $(X,\omega)$, so that $\omega_\bC = B+ i\omega \in \Omega^2(X,\bC)$ is a complexified symplectic form. 
\begin{definition} \label{def:isotopy on object}
Let $\hat{L} = (L, \widetilde{\alpha_L}, \cL,  \nabla)$ and $\hat{L}'= (L', \widetilde{\alpha_{L'}}, \cL',  \nabla')$ be two objects in $\Fuk(\uX)$. We say
$\hat{L}$ is isotopic to $\hat{L}'$ if there exists a Lagrangian isotopy $\psi:  L\times [0,1]\to X$, 
a $C^\infty$ map $\tilde{\alpha}: L\times [0,1]\to \bR$, 
a $C^\infty$ complex line bundle $\tilde{\cL}$ on $L\times [0,1]$,
and a unitary connection $\tilde{\nabla}$ on the complex line bundle $\tilde{\cL}$ such that 
\begin{enumerate}
\item $\psi_0(L)=L$, $\psi_1(L)=L'$, and $\psi_0: L\to L$ is the identity map.
\item The orientation and spin structure on $L$ is the pullback of those
on $L'$ under the diffeomorphism $\psi_1: L\to L'$.
\item For all $(p,t)\in L\times [0,1]$,
$e^{2\pi i\tilde{\alpha}(p,t)} = \alpha_{\psi_t(L)}\circ \psi_t(p)$. For all $p\in L$, $\alpha(p,0)= \tilde{\alpha}_L(p)$ and $\tilde{\alpha}(p,1)= \tilde{\alpha}_{L'}\circ \psi_1(p)$.
\item The curvature $F_{\tilde{\nabla}}$ of $\tilde{\nabla}$ satisfies $F_{\tilde{\nabla}} = -2\pi i \psi^*B$. 
\item Given $t\in [0,1]$, define $\iota_t: L\to L\times [0,1]$ by $\iota_t(p) = (p,t)$. Then
$$
\iota_0^*(\tilde{\cL}, \tilde{\nabla}) = (\cL,  \nabla),\quad
\iota_1^*(\tilde{\cL},  \tilde{\nabla}) = (\psi_1)^*(\cL',  \nabla').
$$
\end{enumerate} 

We say $\hat{\psi}= (\psi,  \tilde{\alpha}, \tilde{\cL},  \tilde{\nabla})$ is an isotopy from $\hat{L}$ to $\hat{L}'$. 
We say the isotopy $\hat{\psi} =(\psi,\tilde{\alpha}, \tilde{\cL}, \tilde{\nabla})$ is {\em exact} if $\psi$ is an exact Lagrangian isotopy. 
\end{definition}

Define
\begin{equation}\label{eq:1form_defn}
\theta_\psi := \int_0^1 b_t  dt \in \Omega^1(L)
\end{equation}
which is a real closed 1-form on $L$ determined by $\psi$. 
\begin{lemma}\label{lem:exact1form} If $\psi$ is an exact Lagrangian isotopy then 
$\theta_\psi$  is an exact 1-form. 
\end{lemma}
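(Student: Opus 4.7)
The plan is to unpack the exactness condition and apply Fubini/the fact that the exterior derivative along $L$ commutes with integration in the parameter $t$.

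First, I would write $\psi^*\omega = b\wedge dt$ explicitly. The local form stated in the paragraph preceding the definition of $\theta_\psi$ tells us that in coordinates $(x,t)$ on $L\times[0,1]$ the 1-form $b$ has no $dt$-component, i.e.\ $b = \sum_j a_j(x,t)\,dx_j$. Since $\psi$ is exact by hypothesis, there exists $H\colon L\times[0,1]\to\mathbb{R}$ with $b\wedge dt = d(H\,dt)$. Expanding the right-hand side,
\[
d(H\,dt) = dH\wedge dt = \Bigl(d_L H + \tfrac{\partial H}{\partial t}\,dt\Bigr)\wedge dt = d_L H\wedge dt,
\]
where $d_L$ denotes the exterior derivative in the $L$-directions only. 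Comparing with $b\wedge dt$ and using that both $b$ and $d_L H$ have no $dt$-component, I conclude $b_t = d_L H_t$ on $L$ for every $t\in[0,1]$, where $H_t(p):=H(p,t)$.

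Next, I would define the smooth function
\[
F\colon L\to\mathbb{R}, \qquad F(p) = \int_0^1 H(p,t)\,dt.
\]
Because $H$ is $C^\infty$ on the compact parameter interval $[0,1]$, one may differentiate under the integral sign, so $d_L F = \int_0^1 d_L H_t\,dt$. Combining this with the previous paragraph gives
\[
\theta_\psi \;=\; \int_0^1 b_t\,dt \;=\; \int_0^1 d_L H_t\,dt \;=\; d_L F,
\]
which exhibits $\theta_\psi$ as an exact 1-form on $L$, as desired.

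There is no real obstacle: the only thing to be careful about is justifying that $d_L$ commutes with the $dt$-integral (standard, since $H$ is smooth on $L\times[0,1]$) and that the local computation $d(H\,dt)=d_L H\wedge dt$ is global — which it is, because $dt$ is a globally defined 1-form on $L\times[0,1]$ and the identity $dH\wedge dt = d_L H\wedge dt$ is coordinate-free.
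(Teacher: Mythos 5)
Your proof is correct and follows essentially the same route as the paper: the paper likewise notes that exactness gives $b_t = dH_t$ and then sets $f = \int_0^1 H_t\,dt$ so that $df = \theta_\psi$. Your additional care in expanding $d(H\,dt) = d_LH\wedge dt$ and justifying differentiation under the integral sign simply fills in details the paper leaves implicit.
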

\begin{proof}
If $\psi$ is exact then $b_t = d H_t$ where $H_t: L\to \bR$ is smooth. Define
$$
f :=\int_0^1 H_t dt \ \in \Omega^0(L).
$$
Then $df =\theta_\psi \in \Omega^1(L)$.
\end{proof}

Definition \ref{smooth-disks_1Lagr}, Definition \ref{smooth-homotopy_1Lagr}, and Lemma \ref{rho_1Lagr} below are analogous to Definition \ref{smooth-disks}, Definition \ref{smooth-homotopy}, and Lemma \ref{rho} but with only one Lagrangian and no marked point. 

\begin{definition}\label{smooth-disks_1Lagr}
Given a Lagrangian submanifold $L$ in $(X,\omega)$, let $\cD(X,L)$ be the set of continuous maps of pairs
$$
u: (\bD, \partial \bD)\lra (X,L)
$$
such that 
\begin{itemize}
\item $u: \bD\to X$ is a piecewise smooth continuous map, and
\item $\partial u:= u|_{\partial\bD} :\partial \bD \simeq S^1 \to L$ is a piecewise smooth loop in $L$.
\end{itemize} 
\end{definition}

\begin{definition} \label{smooth-homotopy_1Lagr}
A {\em homotopy} between two maps $u, u'$ in $\cD(X, L)$ is a continuous map of pairs
$$
h:  (\bD, \partial \bD) \times[0,1]  \lra (X, L )  
$$
satisfying the following conditions. 
\begin{enumerate}
\item[H1.] For any $t\in [0,1]$, define  $h_t:\bD\lra X$ by $h_t(z)= h(z,t)$.  Then
$h_t$ is a map in $\cD(X,L)$, $h_0=u$, and $h_1=u'$. 
\item[H2.] $h$ is a piecewise smooth continuous map.
\item[H3.]  The map $\partial  h= h|_{\partial \bD \times [0,1]}: \partial  \bD \times [0,1] \simeq S^1\times [0,1] \to L$ is piecewise smooth.   
\end{enumerate} 
We say two maps in $\cD(X,L)$ are  {\em homotopic} if there exists a homotopy between them.
\end{definition} 

Note that in H3, $\partial h$ is a  homotopy between the two loops $\partial u$ and $\partial u'$ in $L$. 
\begin{definition}
Given an object $\hat{L}=(L, \widetilde{\alpha_L}, \cL, \nabla)$ in $\Fuk(\uX)$,  we define  $\rho: \cD(X,L)  \to \bC^*$ by 
$$
\rho(u) = e^{2\pi i\int_{\bD}u^*\omega_\bC } \Hol_{\nabla}(\partial u). 
$$
\end{definition} 

\begin{lemma}\label{rho_1Lagr} If 
$u, u' \in \cD(X, L)$ are  homotopic  then  $\rho(u)=\rho(u') \in \bC^*$. 
\end{lemma}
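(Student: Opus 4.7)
The plan is to imitate the proof of Lemma \ref{rho} in the simpler setting of a single Lagrangian boundary without marked points. Given a homotopy $h: \bD \times [0,1] \to X$ with $h_0 = u$, $h_1 = u'$, and $h(\partial \bD \times [0,1]) \subset L$, I will show that the two exponential factors and the two holonomy factors differ by reciprocal phases, so that $\rho(u') = \rho(u)$.

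First I would apply Stokes' theorem to the closed 2-forms $\omega$ and $B$ on the cylinder $\bD \times [0,1]$. Since $d(h^*\omega) = 0$ and $d(h^*B) = 0$, and since $\omega|_L = 0$ (because $L$ is Lagrangian), integrating over $\bD \times [0,1]$ yields
\begin{equation*}
\int_\bD (u')^*\omega - \int_\bD u^*\omega = 0,
\qquad
\int_\bD (u')^*B - \int_\bD u^*B = -\int_{\partial \bD \times [0,1]} (\partial h)^*(B|_L).
\end{equation*}
Combining these, the ratio of the exponential factors is
\begin{equation*}
\frac{e^{2\pi i \int_\bD (u')^* \omega_\bC}}{e^{2\pi i \int_\bD u^* \omega_\bC}}
= \exp\!\Bigl(-2\pi i \int_{\partial \bD \times [0,1]} (\partial h)^*(B|_L)\Bigr).
\end{equation*}

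Next I would compare the holonomies using the curvature condition $F_\nabla = -2\pi i B|_L$. By the standard computation of parallel transport along homotopic loops in terms of the curvature flux (exactly as in \eqref{eqn:Hol-Hol}), we get
\begin{equation*}
\Hol_\nabla(\partial u') \circ \Hol_\nabla(\partial u)^{-1}
= \exp\!\Bigl(2\pi i \int_{\partial \bD \times [0,1]} (\partial h)^*(B|_L)\Bigr) \cdot \mathrm{Id}.
\end{equation*}
The two factors cancel, giving $\rho(u') = \rho(u)$.

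No serious obstacle arises; the argument is a transcription of the multi-Lagrangian case with all the interior corner terms and composition of morphisms suppressed. The only substantive ingredients are the Lagrangian condition $\omega|_L = 0$, the curvature normalization $F_\nabla = -2\pi i B|_L$, and Stokes' theorem. Alternatively, one could invoke Lemma \ref{lem:Stokes_Bfield}: the pair $(\omega_\bC, i \theta_\nabla)$ is a relative de Rham cocycle on $(X, L)$ (where $\theta_\nabla$ is a locally defined connection 1-form satisfying $d\theta_\nabla = F_\nabla/(-2\pi i) = B|_L$), and $\log \rho(u)/(2\pi i)$ is precisely the pairing of this class with $[u] \in H_2(X, L; \bZ)$; since homotopic maps represent the same relative homology class, the equality is immediate.
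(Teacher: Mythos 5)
Your proof is correct and follows essentially the same route as the paper's: Stokes' theorem on the cylinder $\bD\times[0,1]$ for the closed forms $\omega$ and $B$ (using $\omega|_L=0$), combined with the curvature-flux formula for the ratio of holonomies, exactly as in \eqref{eqn:int-omega-single}, \eqref{eqn:int-B-single}, and \eqref{eqn:Hol-Hol-single}. The signs match and the two phases cancel as claimed.
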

\begin{proof} Let $h:\bD\times[0,1]\to X$ be a homotopy between $u$ and $u'$. 
$L$ is a Lagrangian submanifold of $(X,\omega)$,  so $\omega|_L=0$, which implies 
\begin{equation}
\int_{\partial  \bD \times [0,1]}  (\partial h)^* (\omega|_L) =0
\end{equation} 
The curvature $F_{\nabla}$  of  $\nabla$ satisfies $F_{\nabla} = -2\pi i B|_{L}$, so  
\begin{equation} \label{eqn:Hol-Hol-single} 
\Hol_{\nabla}(\partial  u') \circ \Hol_{\nabla }(\partial  u)^{-1} =  \exp\left(2\pi i\int_{ \partial  \bD \times [0,1]} (\partial  h)^*(B|_L )\right). 
\end{equation} 

Note that $d h^*\omega = h^*d\omega =0$ and $d h^*B = h^*dB=0$. By Stokes' theorem,
\begin{equation} \label{eqn:int-omega-single}
0  =  \int_{\bD\times [0,1]} d (h^*\omega) = \int_{\bD} h_1^*\omega    - \int_{\bD} h_0^*\omega + \int_{\partial  \bD\times [0,1]} (\partial  h)^*(\omega|_{L}) 
= \int_{\bD} (u')^*\omega  -\int_{\bD} u^*\omega 
\end{equation}
\begin{equation} \label{eqn:int-B-single} 
\begin{aligned}
0  =&  \int_{\bD \times [0,1]} d (h^*B) = \int_{\bD} h_1^*B    - \int_{\bD} h_0^*B + \int_{\partial  \bD\times [0,1]} (\partial  h)^*(B|_{L}) \\
=& \int_{\bD} (u')^*B  -\int_{\bD} u^*B +  \int_{\partial \bD \times [0,1]}(\partial  h)^*(B|_{L}). 
\end{aligned} 
\end{equation} 
\begin{eqnarray*}
\rho(u') &=&  e^{ 2\pi i  \int_{\bD}{u'}^*\omega_\bC}\Hol_{\nabla'} (\partial  u') 
=   e^{ 2\pi i  \int_{\bD}u^*\omega_{\bC}} \exp\left(-2\pi i\int_{\partial  \bD \times [0,1]} (\partial h)^*(B|_{L}) \right) \Hol_{\nabla} (\partial  u')  \\
&=&  e^{ 2\pi i  \int_{\bD}u^*\omega_{\bC}} \Hol_{\nabla} (\partial  u) 
= \rho(u) 
\end{eqnarray*}
where the second equality follows from  \eqref{eqn:int-omega-single} and \eqref{eqn:int-B-single}, while the third equality follows from \eqref{eqn:Hol-Hol-single}. 

\end{proof}

The following proposition tells us how $\rho(u) \in \bC^*$ changes when we deform $\hL$ by a (possibly non-exact) isotopy.

Given an isotopy $\hat{\psi}$ be between two objects $\hat{L}$ and $\hat{L}'$ in $\Fuk(\uX)$, and define
a map $\cD(X,L)\to \cD(X,L')$ by sending $u$ to $u'$ where
\begin{equation}\label{eqn:define u'}
u'(r e^{i \theta}) = \begin{cases}
u(2r e^{i\theta}), & 0\leq r\leq \frac{1}{2},\\
\psi^{2r-1}\circ u(e^{i\theta}), & \frac{1}{2} \leq r\leq 1.
\end{cases}
\end{equation}

\begin{proposition}\label{prop:Lag_isotop}
$$
\rho(u')  = \rho(u) e^{2\pi \int_{\partial \bD} (\partial u)^* \theta_\psi}= \rho(u) 
e^{2\pi \langle [\theta_\psi], [\partial u] \rangle }
$$
where $[\theta_\psi ] \in H^1(L;\bR)$ and $[\partial u] \in H_1(L;\bZ)$ is in the kernel of $H_1(L;\bZ)\to H_1(X;\bZ)$. In particular, if $\psi$ is exact then $[\theta_\psi] =0\in H^1(L;\bR)$, so 
$\rho(u')= \rho(u)$.
\end{proposition}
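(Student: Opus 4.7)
The plan is to compare $\rho(u)$ and $\rho(u')$ by applying Stokes' theorem twice — once to the closed form $\omega_\bC$ on $X$ and once to the curvature of $\tilde{\nabla}$ on $L\times[0,1]$ — and to observe that the two $B$-field contributions cancel exactly, leaving only the imaginary $\omega$-piece, which (multiplied by the $2\pi i$ in the definition of $\rho$) produces the real exponential $e^{2\pi\int_{\partial\bD}(\partial u)^*\theta_\psi}$. First I would exhibit an explicit piecewise-smooth homotopy $F:\bD\times[0,1]\to X$ from $u$ to $u'$, for example
\[
F(re^{i\theta},t)=\begin{cases} u((1+t)r\,e^{i\theta}) & \text{if }(1+t)r\leq 1,\\ \psi_{(1+t)r-1}\bigl(u(e^{i\theta})\bigr) & \text{if }(1+t)r\geq 1,\end{cases}
\]
which satisfies $F_0=u$, $F_1=u'$ and whose restriction to $\partial\bD\times[0,1]$ factors through the cylinder $\Psi:=\psi\circ(\partial u\times\mathrm{id}):\partial\bD\times[0,1]\to X$, $\Psi(\theta,t)=\psi_t(\partial u(\theta))$.

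Applying Stokes' theorem to $F^*\omega_\bC$ on $\bD\times[0,1]$ (which is closed) gives
\[
\int_{\bD}(u')^*\omega_\bC - \int_{\bD}u^*\omega_\bC = -\int_{\partial\bD\times[0,1]}\Psi^*\omega_\bC.
\]
To evaluate the right-hand side I use the Lagrangian condition: since each $\psi_t(L)$ is Lagrangian, $\psi^*\omega$ has no purely $L$-directional 2-form, so $\psi^*\omega=b\wedge dt$ and its contribution is $\int_{\partial\bD}(\partial u)^*\theta_\psi$. Writing $\psi^*B=B_t+\gamma_t\wedge dt$ with $B_t=\psi_t^*B\in\Omega^2(L)$, the pure-$L$ piece $B_t$ pulls back to a 2-form on the 1-dimensional $\partial\bD$ and hence vanishes, so the $B$-contribution reduces to $\int_{\partial\bD}(\partial u)^*\theta^B$ where $\theta^B:=\int_0^1\gamma_t\,dt\in\Omega^1(L)$. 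Hence
\[
\frac{e^{2\pi i\int_{\bD}(u')^*\omega_\bC}}{e^{2\pi i\int_{\bD}u^*\omega_\bC}}=e^{-2\pi i\int_{\partial\bD}(\partial u)^*\theta^B}\cdot e^{2\pi\int_{\partial\bD}(\partial u)^*\theta_\psi}.
\]

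For the holonomy ratio I would exploit the extended connection $\tilde{\nabla}$ from Definition \ref{def:isotopy on object}. The relations $\iota_0^*\tilde{\nabla}=\nabla$ and $\iota_1^*\tilde{\nabla}=\psi_1^*\nabla'$, together with $\partial u'=\psi_1\circ\partial u$, allow me to rewrite both $\Hol_\nabla(\partial u)$ and $\Hol_{\nabla'}(\partial u')$ as holonomies of the \emph{same} connection $\tilde{\nabla}$ along the loops $\iota_0\circ\partial u$ and $\iota_1\circ\partial u$ in $L\times[0,1]$. These loops cobound the cylinder $\tilde C(\theta,t)=(\partial u(\theta),t)$, and the same curvature-Stokes argument used in the proof of Lemma \ref{rho_1Lagr}, applied now to $F_{\tilde{\nabla}}=-2\pi i\,\psi^*B$ on $L\times[0,1]$, yields
\[
\Hol_{\nabla'}(\partial u')=\Hol_\nabla(\partial u)\cdot e^{2\pi i\int_{\tilde C}\psi^*B}=\Hol_\nabla(\partial u)\cdot e^{2\pi i\int_{\partial\bD}(\partial u)^*\theta^B},
\]
where again the pure-$L$ part of $\psi^*B$ vanishes on $\tilde C$ by the same dimension count.

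Multiplying the two ratios, the $\theta^B$ contributions cancel and I obtain $\rho(u')/\rho(u)=e^{2\pi\int_{\partial\bD}(\partial u)^*\theta_\psi}$. The second equality $=e^{2\pi\langle[\theta_\psi],[\partial u]\rangle}$ is the de Rham pairing: $\theta_\psi$ is closed because expanding $d(b\wedge dt)=d\psi^*\omega=0$ in its $L$-directional part forces $d_L b_t=0$ for every $t$, and $[\partial u]\in H_1(L;\bZ)$ lies in the kernel of $H_1(L;\bZ)\to H_1(X;\bZ)$ because $u$ itself bounds it in $X$. If $\psi$ is exact, Lemma \ref{lem:exact1form} makes $\theta_\psi$ exact, so the pairing vanishes and $\rho(u')=\rho(u)$. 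The one delicate point is the cancellation of the two $\theta^B$ terms; this is the concrete manifestation of the fact (Lemma \ref{lem:Stokes_Bfield}) that $\rho(u)$ is really a pairing of the relative de Rham class $[(\omega_\bC,\theta_\nabla)]\in H^2_{\mathrm{dR}}(X,L;\bC)$ with $[u]\in H_2(X,L;\bZ)$, so the $B$-piece is invariant under isotopy of $L$ and only the imaginary $\omega$-piece can produce a genuine change.
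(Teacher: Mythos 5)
Your proposal is correct and follows essentially the same route as the paper's proof: the change in $\int_{\bD}u^*\omega_\bC$ is computed over the cylinder $\partial\bD\times[0,1]$, the $B$-contribution there is identified with $\frac{i}{2\pi}F_{\tilde{\nabla}}$ via the curvature condition, and it cancels exactly against the change in holonomy, leaving only the $\psi^*\omega=b\wedge dt$ term. Your explicit splitting of $\psi^*B$ into $B_t+\gamma_t\wedge dt$ and the homotopy-plus-Stokes derivation of the annulus formula are just more verbose versions of the paper's direct decomposition of $u'$ and its single $F_{\tilde{\nabla}}$-integral cancellation.
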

Note that the factor $e^{2\pi \int_{\partial \bD} (\partial u)^* \theta_\psi}$ only depends on $\psi$ and $\omega$; it is independent of $B$. 
\begin{proof}
We have a map $(\partial u)\times \mathrm{id}: \partial\bD\times [0,1]\lra L\times [0,1]$
given by $(e^{i\theta}, t)\mapsto (u(e^{i\theta}), t)$.  Then 
\begin{equation}\label{eqn:integral-annulus}
\int_{\bD} (u')^*\omega_\bC 
=\int_{\bD} u^*\omega_\bC
-\int_{\partial \bD \times [0,1]} (\partial u \times \mathrm{id})^*\psi^*\omega_\bC
\end{equation}
We get a negative sign on the right hand side of \eqref{eqn:integral-annulus} because the orientation on $\bD$ is compatible with $dr\wedge d\theta$, while the orientation on $\partial \bD\times [0,1]$ is compatible with $d\theta\wedge dr = -dr\wedge d\theta$. 
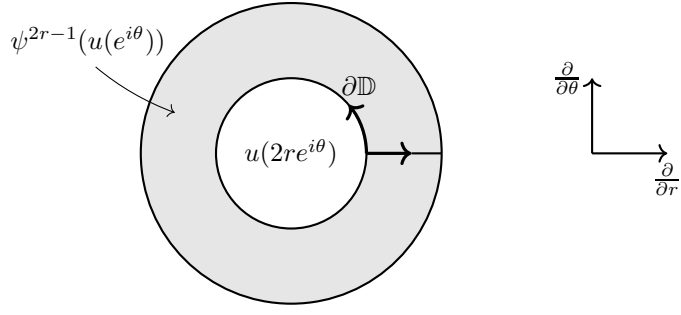
\begin{figure}
\centering                    \begin{tikzpicture}
    \draw [thick, draw,fill=gray!20](0,0) circle (2cm); 
     \draw [thick, draw,fill=white](0,0) circle (1cm); 
     \draw[very thick, ->] (1,0) arc (0:40:1);
     \draw[very thick, ->] (1,0)--(1.6,0); 
     \draw[thick] (1.6,0)--(2,0);

    \node at (0,0) {$u(2r e^{i\theta})$};
    \node at (-2.7, 1.5){$\psi^{2r-1}(u(e^{i\theta}))$};
    \draw[->] (-2.6, 1.15) arc (230:250:3.5);
    \node at (0.9, 0.9) {$\partial \mathbb D$};

    \draw[thick, ->] (4,0)--(5,0) node[below] {$\frac{\partial}{\partial r}$};
    \draw[thick, ->] (4,0)--(4,1) node[left] {$\frac{\partial}{\partial \theta}$};
     \end{tikzpicture}
    \caption{The map $u'$  defined in Equation \eqref{eqn:define u'}. The orientation on $\partial \bD\times [0,1]$ is given by the ordered basis $(e_1=\frac{\partial}{\partial \theta}, e_2=\frac{\partial}{\partial r})$.}
    \label{fig:annulus}
\end{figure}
We also have
\begin{equation}\label{eqn:integrand-annulus}
(\partial u \times \mathrm{id})^* \psi^* \omega_{\bC} 
=(\partial u\times \mathrm{id})^*( \psi^*B + i\psi^*\omega) 
= (\partial u\times \mathrm{id})^*(\frac{i}{2\pi} F_{\tilde{\nabla}}) + i (\partial u\times \mathrm{id})^*(b_t\wedge dt)             
\end{equation}
Equation \eqref{eqn:integral-annulus} and Equation \eqref{eqn:integrand-annulus} imply 
\begin{equation}\label{eq:area_only_change}
2\pi i \int_{\bD}(u')^* \omega_\bC = 2\pi i \int_{\bD} u^*\omega_\bC
+ \int_{\partial \bD \times [0,1]}  (\partial u\times \mathrm{id})^* F_{\tilde{\nabla}} + 2\pi \int_{\partial \bD} \theta_\psi.
\end{equation}
We also have
$$
\Hol_{\nabla'}(\partial u')\circ \Hol_\nabla (\partial u)^{-1} = 
\exp\left( -\int_{\partial \bD\times [0,1]} (\partial u\times \mathrm{id})^*F_{\tilde{\nabla}} \right).
$$
Therefore, 
\begin{eqnarray*}
\rho(u') &=& e^{2\pi i\int_{\bD}(u')^* \omega_\bC} \Hol_{\nabla'}(\partial \psi(u)) \\
&=&  e^{2\pi i \int_{\bD}u^*\omega_\bC}  \exp\left(\int_{\partial \bD\times [0,1]}(\partial u\times \mathrm{id})^*F_{\tilde{\nabla}}\right) e^{2\pi\int_{\partial\bD} \theta_\psi}
\Hol_{\nabla'}(\partial u') \\
&=& e^{2\pi i \int_{\bD}u^*\omega_\bC} \Hol_{\nabla}(\partial u) e^{2\pi\int_{\partial\bD} \theta_\psi }\\
&=& \rho(u) e^{2\pi\int_{\partial\bD } \theta_\psi }
\end{eqnarray*}
\end{proof}

Let $\hL_j$ and $p_j\in L_{j-1}\cap L_j$  be as in Section \ref{subsec:products}. We would like to understand how 
\begin{equation}\label{eq: product in isotopy section}
\mu^k: CF^{i_k}(\hL_{k-1}, \hL_k)\otimes \cdots \otimes CF^{i_1}(\hL_0, \hL_1)
\lra CF^{2-k + \sum_{j=1}^k i_j}(\hL_0, \hL_k)
\end{equation}
change as we deform $\hL_j$ by an isotopy.
It suffices to understand this when we deform only one of them, say $\hL_m$. \textcolor{black}{We divide up the domain of a disc $u$, counted in $\mu^k$, into $R_1 \cup R_2 = \mathbb{D}$. We do this in order to piecewise-define a new curve $u'$ with the new boundary condition. On the region $R_1$, we map the original curve $u$. On the region $R_2$ we move with the Lagrangian isotopy.} Let $\hat{\psi}$ be an isotopy between $\hL_m$ and $\hL'_m$ such that the map
$\psi: L_m\times [0,1] \to X$ is transverse to $L_{m-1}$ and $L_{m+1}$. 
We further assume that, for all $t\in [0,1]$,
$$
\psi(p_m,t)\in L_{m-1}, \quad \psi(p_{m+1},t)\in L_{m+1}.
$$
Let $\gamma_m: [0,1]\to L_{m-1}$ and $\gamma_{m+1}:[0,1]\to L_{m+1}$ be defined by 
$$
\gamma_m(t) =\psi(p_m,t),\quad \gamma_{m+1}(t) = \psi(p_{m+1},t),
$$
and let $p'_m := \gamma_m(1) \in L_{m-1}\cap L'_m$, $p'_{m +1}:= 
\gamma_{m+1}(1) \in L'_m\cap L_{m+1}$. 
Then $\gamma_m$ (resp. $\gamma_{m+1}$) is a path in $L_{m-1}$ (resp. $L_{m+1}$)
from $p_m$ (resp. $p_{m+1}$) to $p'_m$ (resp. $p'_{m+1}$). See Figure \ref{fig:isotopy} for illustration.

We define 
$$
\cD(X, (L_0,\ldots, L_k), (p_0,\ldots, p_k))
\to \cD(X, (L_0, \ldots, L_{m-1}, L'_m, L_{m+1},\ldots, L_m), (p_0,\ldots, p_{m-1}, p'_m, p'_{m+1},
p_{m+2}, \ldots, p_k)
$$
by sending $u$ to $u'$, where $u'$ is defined in terms of 
$$
u: \Big(\bD, \partial \bD=\bigcup_{j=0}^k \partial_j\bD, \{z_0,\ldots, z_k\}\Big)
\lra \Big( X, \bigcup_{j=0}^k L_j, \{p_0, \ldots, p_k\}\Big)
$$
and $\psi: L_m\times [0,1]\to X$ as follows. Without loss of generality, we may assume $z_0=1$, and
$z_j= e^{i\theta_j}$, where $0=\theta_0< \theta_1 < \cdots < \theta_k <2\pi$. Let
$$
\theta_m' =\frac{2\theta_m +\theta_{m+1}}{3}, \quad \theta_{m+1}' = \frac{\theta_m + 2\theta_{m+1}}{3}.
$$
Then $\theta_m<\theta'_m < \theta'_{m+1}<\theta_{m+1}$. Let $z'_m =e^{i\theta'_m}$ and $z'_{m+1}= e^{i\theta'_{m+1}}$.
Let $R_2 \subset \bD$ be the closed region bounded by $\partial_m \bD$ and
the line segment $\overline{z_m z_{m+1}}$ connecting $z_m$ and $z_{m+1}$, and let $R_1$ be the closure of 
$\bD\setminus R_2$. Then $R_1\cup R_2 =\bD$ and $R_1\cap R_2 = \overline{z_m z_{m+1}}$.
Let $C$ be the closure of $\partial\bD\setminus \partial_m\bD$. Then 
$$
\partial \bD = C\cup \partial_m\bD, \quad \partial R_1 = C\cup \overline{z_m z_{m+1}}, 
\quad \partial R_2 = \overline{z_m z_{m+1}} \cup \partial_m \bD. 
$$
\begin{figure}
\centering
\begin{subfigure}[b]{0.45\textwidth}
\begin{tikzpicture}
\draw[thick, red, fill=gray!20] (-1, {sqrt(3)}) arc (120:240:2)node[midway, left]{$\partial_m\mathbb D$};
\draw[thick, dotted] (0,-2) arc (-90:-60:2);
\draw[thick] (1, -{sqrt(3)}) arc (-60:60:2);
\draw[thick, dotted] (1, {sqrt(3)}) arc (60:90:2);
\draw[thick] (0, 2) arc (90:120:2);
\draw[thick] (-1, {-sqrt(3)}) arc (-120:-90:2);
\draw (-1, {sqrt(3)})--(-1, {-sqrt(3)});

\filldraw[black] (2,0) circle (2pt) node[right]{$z_0$};
\filldraw[black] ({sqrt(3)},1) circle (2pt) node[right]{$z_1$};
\filldraw[black] ({sqrt(3)},-1) circle (2pt) node[right]{$z_k$};
\filldraw[Cblue] (-1, {sqrt(3)}) circle (2pt) node[left]{$z_m$};
\filldraw[Cblue] (-1.879, 0.684) circle (2pt) node[left]{$z_m'$};
\filldraw[Cgreen] (-1, {-sqrt(3)}) circle (2pt) node[left]{$z_{m+1}$};
\filldraw[Cgreen] (-1.879, -0.684) circle (2pt) node[left]{$z_{m+1}'$};
\node at (2.3, 0.5) {$\partial_0\mathbb D$};
\node at (2.3, -0.5) {$\partial_k\mathbb D$};

\node at (-1.5, 0) {$R_2$};
\node at (0.5,0){$R_1$};
\end{tikzpicture}
\end{subfigure}
\begin{subfigure}[b]{0.45\textwidth}

\begin{tikzpicture}

\draw[thick] (1.83, 1.5) .. controls +(-0.02, -0.02) and +(0.1, 0.5).. ({sqrt 3}, 1) .. controls +(-0.1,-0.5) and +(-0.1, 0.2) .. (2,0)node[midway, left]{$L_{0}$}.. controls +(0.1, -0.2) and +(-0.02,0.02) .. (2.2,-0.4);

\draw[thick] (1.83, -1.6).. controls +(-0.02, 0.02) and +(0.1, -0.6).. ({sqrt 3}, -1).. controls +(-0.1, 0.6) and +(-0.1, -0.2)..(2,0)node[midway,left]{$L_{k}$}.. controls +(0.1, 0.2) and +(-0.02, -0.02).. (2.2, 0.4);

\draw[thick] (-0.7, 2.4)node[right]{$L_{m-1}$}.. controls+(-0.02, -0.02) and+(0.2,0.4)..(-1, {sqrt(3)}).. controls +(-0.2,-0.4) and +(0.3,0.2).. (-1.879, 0.684).. controls +(-0.3,-0.2) and +(0.02,0.02)..(-2.5,0.3) ;

\draw[thick](-0.7, -2.4)node[right]{$L_{m+1}$}.. controls+ (-0.02,0.02) and+(0.2,-0.4)..(-1, {-sqrt(3)})..controls +(-0.2,0.4) and +(0.3,-0.2)..(-1.879, -0.684)..controls +(-0.3,0.2) and +(0.02, -0.02) ..(-2.5,-0.3);

\draw[very thick, Cblue] (-1, {sqrt(3)}).. controls +(-0.2,-0.4) and +(0.3,0.2).. (-1.879, 0.684)  node[midway, below, xshift=0.1cm, yshift=-0.1cm]{$\gamma_m$};
\draw[very thick, Cgreen] (-1, {-sqrt(3)})..controls +(-0.2,0.4) and +(0.3,-0.2)..(-1.879, -0.684) node[midway,above, xshift=0.2cm]{$\gamma_{m+1}$};

\draw[thick, Corange] (-2.1, 1.1).. controls+(0.02,-0.02) and+(-0.2,0.4)..(-1.879, 0.684)..controls +(0.2,-0.4) and+(0.2,0.4)..(-1.879, -0.684)node[midway, right]{$L_m'$}.. controls +(-0.2,-0.4) and+ (0.02,0.02).. (-2.1,-1.1);

\draw[thick, red](-1.45, 2.6).. controls +(0.05, -0.05) and +(-0.5,1).. (-1, {sqrt 3}).. controls +(0.5, -1) and +(0.5, 1) .. (-1,{-sqrt(3)})node[midway, right]{$L_m$}.. controls +(-0.5,-1) and +(0.05,0.05) .. (-1.45, -2.6) ;

\filldraw[black] (2,0) circle (2pt) node[right]{$p_0$};
\filldraw[black] ({sqrt(3)},1) circle (2pt) node[right]{$p_1$};
\filldraw[black] ({sqrt(3)},-1) circle (2pt) node[right]{$p_k$};
\filldraw[Cblue] (-1, {sqrt(3)}) circle (2pt) node[left]{$p_m$};
\filldraw[Cblue] (-1.879, 0.684) circle (2pt) node[left,yshift=0.1cm]{$p_m'$};
\filldraw[Cgreen] (-1, {-sqrt(3)}) circle (2pt) node[left]{$p_{m+1}$};
\filldraw[Cgreen] (-1.879, -0.684) circle (2pt) node[left]{$p_{m+1}'$};

\draw[thick, dotted] (0,-2) arc (-90:-60:2);
\draw[thick, dotted] (1, {sqrt(3)}) arc (60:90:2);
\end{tikzpicture}
\end{subfigure}
\caption{Domain (left) and target (right) of a holomorphic disc contributing to $\mu^k$.  There is an isotopy between Lagrangians $L_m$ and $L_m'$. This figure illustrates the notations in Section \ref{sec: isotopy}, below Equation \eqref{eq: product in isotopy section}.}
\label{fig:isotopy}
\end{figure}
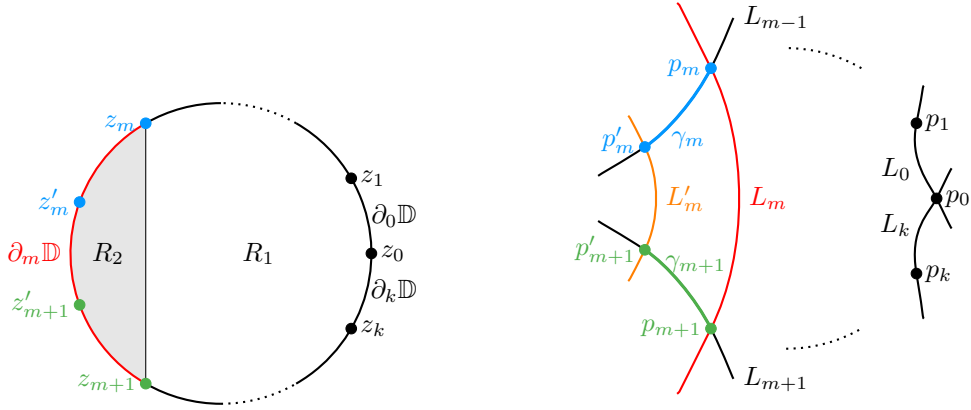
There is a homeomorphism $\phi_1: R_1\to \bD$ which satisfies the following properties.
\begin{itemize}
\item $\phi_1$ restricts to a diffeomorphism from the interior of $R_1$ to the interior of $\bD$.
\item $\phi_1|_C: C\to C$ is the identity map.
\item $\phi_1$ maps the interior of $\overline{z_m z_{m+1}}$ diffeomorphically
to the interior of $\partial_m \bD$. 
\end{itemize}
There is a homeomorphism $\phi_2: R_2\to \partial_m \bD\times [0,1]$ satisfying
the following properties:
\begin{itemize}
    \item  $\phi_2$ restricts to a diffeomorphism from the interior of $R_2$ to 
$\partial_m \bD\times [0,1]$.
\item $\phi_2(z)= (\phi_1(z),0)$ for $z\in \overline{z_m z_{m+1}}$. In particular, 
$\phi_2(z_m)=(z_m,0)$ and $\phi_2(z_{m+1})= (z_{m+1},0)$.
\item $\phi_2|_{\partial_m\bD}:\partial_m \bD \lra \{ z_m, z_{m+1}\}\times [0,1] \cup \partial_m\mathbb D\times \{1\} $  
is piecewise smooth
and $\phi_2(z'_m)=(z_m,1)$, $\phi_2(z'_{m+1})=(z_{m+1},1)$.
\end{itemize}
See Figure \ref{fig:psi2} for an illustration of the domain and image of $\phi_2$.
\begin{figure}
\centering
\begin{subfigure}[b]{0.45\textwidth}
\begin{tikzpicture}
\draw[thick, red, fill=gray!20] (-1, {sqrt(3)}) arc (120:240:2)node[midway, left]{$\partial_m\mathbb D$};
\draw[thick] (-1, {sqrt(3)})--(-1, {-sqrt(3)}) node[midway, right]{$\overline{z_mz_{m+1}}$};

\filldraw[Cblue] (-1, {sqrt(3)}) circle (2pt) node[left]{$z_m$};
\filldraw[Cblue] (-1.879, 0.684) circle (2pt) node[left]{$z_m'$};
\filldraw[Cgreen] (-1, {-sqrt(3)}) circle (2pt) node[left]{$z_{m+1}$};
\filldraw[Cgreen] (-1.879, -0.684) circle (2pt) node[left]{$z_{m+1}'$};

\node at (-1.5, 0) {$R_2$};
\end{tikzpicture}
\end{subfigure}
\begin{subfigure}[b]{0.45\textwidth}
\begin{tikzpicture}
\fill[fill=gray!20] (-1, {sqrt(3)}) arc (120:240:2)--(-1, {-sqrt(3)})--(0,{-sqrt(3)})--(0, {-sqrt(3)}) arc (240:120:2)--(0,{sqrt (3)})--(-1, {sqrt(3)});

\draw[thick] (-1, {sqrt(3)}) arc (120:240:2)node[midway, left]{$\phi_2(\overline{z_mz_{m+1}})$};
\draw[thick, red] (0, {sqrt(3)}) arc (120:240:2)node[midway, right]{$\phi_2(\partial_m\mathbb D)$};
\draw[thick, red](-1, {sqrt(3)})--(0,{sqrt(3)});
\draw[thick, red](-1, {-sqrt(3)})--(0,{-sqrt(3)});

\filldraw[Cblue] (-1, {sqrt(3)}) circle (2pt) node[left]{$\phi_2(z_m)=(z_m,0)$};
\filldraw[Cblue] (0, {sqrt(3)}) circle (2pt) node[right]{$\phi_2(z_m')=(z_m, 1)$};
\filldraw[Cgreen] (-1, {-sqrt(3)}) circle (2pt) node[left]{$\phi_2(z_{m+1})=(z_{m+1},0)$};
\filldraw[Cgreen] (0, {-sqrt(3)}) circle (2pt) node[right]{$\phi_2(z_{m+1}')=(z_{m+1},1)$};

\end{tikzpicture}
\end{subfigure}
\caption{The domain (left) and image (right) of $\phi_2:R_2\to \partial_m\mathbb D\times [0,1]$.}
\label{fig:psi2}
\end{figure}
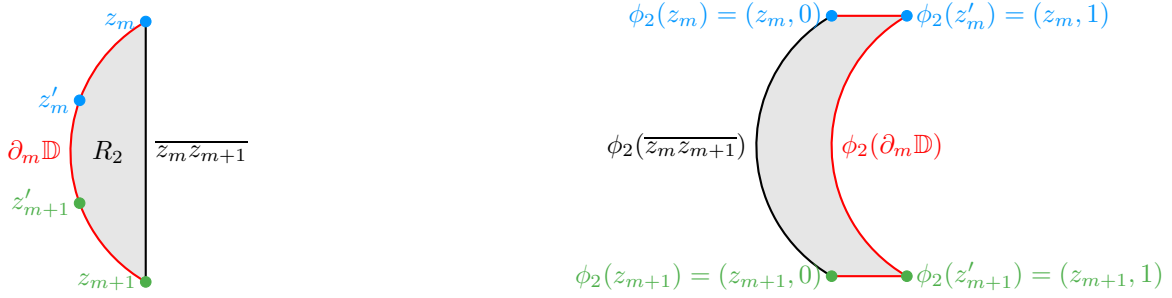

Define $P: \partial_m\bD \times [0,1]\to X$ by $P(z,t)= \psi_t \circ \partial_m u(z)$. 
Define $u':\bD\to X$ by 
\begin{equation}\label{eq:u'_mult_Lagr}
u'(z) = \begin{cases}
u\circ \phi_1(z), & z\in R_1;\\
P\circ \phi_2(z), & z\in R_2.
    \end{cases}
\end{equation}
Let $\partial'_{m-1}\bD$, $\partial'_m\bD$, $\partial'_{m+1}\bD$ denote
the arcs from $z_{m-1}$ to $z_m'$, $z_m'$ to $z'_{m+1}$, and  to $z_{m+1}'$ to  $z_{m+2}$, respectively.  
Then $u'$ defines a map from
$$
\Big(\bD, \partial \bD=\bigcup_{j=0}^{m-2} \partial_j\bD
\cup \bigcup_{j=m-1}^{m+1} \partial'_j\bD\cup \bigcup_{j=m+2}^k \partial_j\bD
, \{z_0,\ldots, z_{m-1}, z'_m, z'_{m+1}, z_{m+2}, \ldots,  z_k\}\Big)
$$
to
$$
\Big( X, \bigcup_{j=0}^{m-1} L_j \cup L'_m \cup \bigcup_{j=m+1}^k L_j, \{p_0, \ldots, p_{m-1}, p'_m,
p'_{m+1}, p_{m+2},\ldots, p_k\}\Big)
$$
and is an element in 
$$\cD(L_0,\ldots, L_{m-1},L'_m, L_{m+1}, \ldots, L_k),
(p_0, \ldots, p_{m-1}, p'_m, p'_{m+1}, p_{m+2},\ldots, p_k).
$$

Given
$$
\rho_j \in \Hom_{\bC}((\cL_{j-1})_{p_j}, (\cL_j)_{p_j}), \quad j=1,\ldots, k.
$$
Define
\begin{eqnarray*}
\rho'_m &:=&  \Hol_{\tilde{\nabla}}(\gamma_m)\circ \rho_m \circ \Hol_{\nabla_{m-1}}(\gamma_m)^{-1} \in \Hom_{\bC}((\cL_{m-1})_{p'_m}, (\cL'_m)_{p'_m}) \\
\rho'_{m+1} &:=& \Hol_{\nabla_{m+1}}(\gamma_{m+1})\circ  \rho_{m+1}\circ \Hol_{\tilde{\nabla}}(\gamma_{m+1})^{-1} \in \Hom_{\bC}((\cL'_m)_{p'_{m+1}}, (\cL_{m+1})_{p'_{m+1}})
\end{eqnarray*}

\begin{theorem}\label{prop:polygon-area}
$$
\rho(\rho_k, \cdots, \rho_{m+2}, \rho'_{m+1}, \rho'_m, \rho_{m-1}, \ldots, \rho_1; u') = 
\rho(\rho_k,\ldots, \rho_1; u) e^{2\pi\int_{\partial_m \bD} (\partial_m u)^*\theta_\psi }.
$$
\end{theorem}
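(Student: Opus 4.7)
The plan is to adapt the proof of Proposition \ref{prop:Lag_isotop} to the multi-Lagrangian setting, exploiting that $u'$ equals $u$ on $R_1$ (through the diffeomorphism $\phi_1$) and equals $P\circ \phi_2$ on $R_2$, where $P(z,t)=\psi_t(\partial_m u(z))$. Consequently, the only modifications to $\rho(u)$ arise from a changed complexified symplectic area and from the central block of three boundary segments $\partial'_{m-1}u'$, $\partial'_m u'$, $\partial'_{m+1}u'$.

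First I would compute $\int_{\bD}(u')^*\omega_{\bC}$ by splitting over $R_1\cup R_2$. Change of variables via $\phi_1$ returns $\int_{\bD} u^*\omega_{\bC}$ on $R_1$, while on $R_2$ the change of variables via $\phi_2$, together with an orientation-reversal sign (as in Proposition \ref{prop:Lag_isotop}), yields $-\int_{\partial_m\bD\times[0,1]}(\partial_m u\times \mathrm{id})^*\psi^*\omega_\bC$. Decomposing $\omega_\bC=B+i\omega$ and using $F_{\tilde{\nabla}}=-2\pi i\,\psi^*B$ along with $\psi^*\omega=b\wedge dt$ (so Fubini produces $\int_{\partial_m\bD}(\partial_m u)^*\theta_\psi$) gives
\[
2\pi i \int_{\bD}(u')^*\omega_{\bC} = 2\pi i \int_{\bD} u^*\omega_{\bC} + \int_{\partial_m\bD\times[0,1]}(\partial_m u\times \mathrm{id})^*F_{\tilde{\nabla}} + 2\pi\int_{\partial_m\bD}(\partial_m u)^*\theta_\psi,
\]
which is the direct analog of Equation \eqref{eq:area_only_change}.

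Next I would analyze the boundary holonomies. The arcs $\partial_j u'$ for $j\notin\{m-1,m,m+1\}$ coincide with $\partial_j u$. The three deformed arcs are the concatenations of $\partial_{m-1}u$ followed by $\gamma_m$, the path $\psi_1\circ\partial_m u$, and $\gamma_{m+1}^{-1}$ followed by $\partial_{m+1}u$. Substituting the definitions of $\rho'_m$ and $\rho'_{m+1}$ into the composition defining $\rho(u')$, the factors $\Hol_{\nabla_{m-1}}(\gamma_m)^{\pm 1}$ and $\Hol_{\nabla_{m+1}}(\gamma_{m+1})^{\pm 1}$ telescope out, so the only change relative to $\rho(u)$ is that $\Hol_{\nabla_m}(\partial_m u)$ gets replaced by $\Hol_{\tilde{\nabla}}(\gamma_{m+1})^{-1}\circ \Hol_{\nabla'_m}(\psi_1\circ\partial_m u)\circ \Hol_{\tilde{\nabla}}(\gamma_m)$.

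The final step is to apply Stokes' theorem to the pulled-back connection $(\partial_m u\times \mathrm{id})^*\tilde{\nabla}$ on the rectangle $\partial_m\bD\times[0,1]$, whose four boundary paths are transported by $\Hol_{\nabla_m}(\partial_m u)$, $\Hol_{\tilde{\nabla}}(\gamma_{m+1})$, $\Hol_{\nabla'_m}(\psi_1\circ\partial_m u)$, and $\Hol_{\tilde{\nabla}}(\gamma_m)$. This is the rectangular analog of the cylinder identity \eqref{eqn:Hol-Hol-single} and yields
\[
\Hol_{\tilde{\nabla}}(\gamma_{m+1})^{-1}\circ \Hol_{\nabla'_m}(\psi_1\circ\partial_m u)\circ \Hol_{\tilde{\nabla}}(\gamma_m) = \exp\!\left(-\int_{\partial_m\bD\times[0,1]}(\partial_m u\times \mathrm{id})^*F_{\tilde{\nabla}}\right)\Hol_{\nabla_m}(\partial_m u).
\]
The $F_{\tilde{\nabla}}$-scalar exactly cancels the $F_{\tilde{\nabla}}$-contribution from the area computation above, leaving the lone factor $e^{2\pi\int_{\partial_m\bD}(\partial_m u)^*\theta_\psi}$ as claimed. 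The main obstacle is bookkeeping of orientations: the signs arising from the orientation-reversal of $\phi_2$, from $F_{\tilde{\nabla}}=-2\pi i\,\psi^*B$, and from the Stokes rectangle formula must align precisely for the $B$-contributions to cancel, and any single sign error reintroduces a stray $B$-factor.
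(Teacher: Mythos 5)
Your proposal follows the paper's proof essentially step for step: the same splitting of the area integral over $R_1\cup R_2$ with the orientation sign, the same identity \eqref{eq:area_only_change_multLag}, the same telescoping of the $\Hol(\gamma_m)^{\pm1}$, $\Hol(\gamma_{m+1})^{\pm1}$ factors against the definitions of $\rho'_m,\rho'_{m+1}$, and the same rectangular holonomy--curvature identity whose $F_{\tilde\nabla}$ factor cancels the one from the area computation. The argument is correct and matches the paper's.
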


\begin{remark}\label{rem:exact_integrl}
The 1-form $\theta_\psi$ is closed, so the integral 
$$
\int_{\partial_m \bD}(\partial_m u)^*\theta_\psi
$$
depends only on the fixed-end-point homotopy class of the path
$\partial_m u: \partial_m\bD \simeq [0,1] \to L_m$.

If $\theta_\psi = df$ is exact then 
$$
\int_{\partial_m \bD}(\partial_m u)^*\theta_\psi = f(p_{m+1}) - f(p_m)
$$
which is independent of $u$. 
\end{remark}

\begin{proof}[Proof of Theorem \ref{prop:polygon-area}] We have
\begin{equation} \label{eqn:integral-rectangle}
\int_{\bD} (u')^*\omega_\bC 
=\int_{\bD} u^*\omega_\bC
-\int_{\partial_m \bD \times [0,1]} (\partial_m u \times \mathrm{id})^*\psi^*\omega_\bC.
\end{equation}
where
\begin{equation} \label{eqn:integrand-rectangle}
(\partial_m u \times \mathrm{id})^* \psi^* \omega_{\bC} 
=(\partial_m u\times \mathrm{id})^*( \psi^*B + i\psi^*\omega) 
= (\partial_m u\times \mathrm{id})^*(\frac{i}{2\pi} F_{\tilde{\nabla}}) + i (\partial_m u\times \mathrm{id})^*(b_t\wedge dt).
\end{equation}
Equation \eqref{eqn:integral-rectangle} and Equation \eqref{eqn:integrand-rectangle} imply
\begin{equation}\label{eq:area_only_change_multLag}
2\pi i \int_{\bD}(u')^* \omega_\bC = 2\pi i \int_{\bD} u^*\omega_\bC
+ \int_{\partial_m \bD \times [0,1]}  (\partial_m u\times \mathrm{id})^* F_{\tilde{\nabla}} + 2\pi \int_{\partial_m \bD} \theta_\psi.
\end{equation}
We also have
\begin{eqnarray*}
\Hol_{\nabla_j}(\partial_j u') &=& \Hol_{\nabla_j}(\partial_j u) \quad\text{if } j\in \{0,1\ldots,k\}-\{m-1, m, m+1\},\\
\Hol_{\nabla_{m-1}}(\partial_{m-1} u') &=& \Hol_{\nabla_{m-1}}(\gamma_m) \circ \Hol_{\nabla_{m-1}}(\partial_{m-1} u),\\
\Hol_{\nabla'_m}(\partial_m u') &=& \Hol_{\tilde{\nabla}}(\gamma_{m+1})
\circ \Hol_{\nabla_m}(\partial_m u) \circ \Hol_{\tilde{\nabla}}(\gamma_m)^{-1} 
\exp\left(- \int_{\partial_m\bD\times [0,1]} (\partial_m u\times \mathrm{id})^*F_{\tilde{\nabla}}\right), \\
\Hol_{\nabla_{m+1}} (\partial_{m+1}u') &=& \Hol_{\nabla_{m+1}}(\partial_{m+1}u) \circ \Hol_{\nabla_{m+1}}(\gamma_{m+1})^{-1}.
\end{eqnarray*}
Note that $\Hol_{\nabla_{m-1}}(\gamma_m)=\Hol_{\tilde \nabla}(\gamma_m)$ and $\Hol_{\nabla_{m+1}}(\gamma_{m+1})=\Hol_{\tilde \nabla}(\gamma_{m+1})$.
Therefore, 
\begin{eqnarray*}
&& \rho(\rho_k,\ldots, \rho_{m+2}, \rho'_{m+1}, \rho'_m, \rho_{m-1},\ldots,\rho_1; u') \\ 
&=& e^{2\pi i\int_{\bD}(u')^* \omega_\bC} \Hol_{\nabla_k}(\partial_k u') \circ \rho_k 
\circ\Hol_{\nabla_{k-1}}(\partial_{k-1}u') \circ \rho_{k-1} \circ ... \circ \rho_{m+2} \\
&& \circ
\Hol_{\nabla_{m+1}}(\partial_{m+1}u')\circ \rho'_{m+1} \circ \Hol_{\nabla'_m}(\partial_m u') \circ \rho'_m \circ
\Hol_{\nabla_{m-1}}(\partial_{m-1}u') \circ \rho_{m-1} \circ \cdots \circ \rho_1 \circ\Hol_{\nabla_0}(\partial_0 u')\\ 
&=& e^{2\pi i\int_{\bD}(u')^* \omega_\bC} \exp\left(-\int_{\partial_m \bD\times [0,1]}(\partial_m u\times \mathrm{id})^*F_{\tilde{\nabla}}\right) \Hol_{\nabla_k}(\partial_k u) \circ \rho_k 
\circ\Hol_{\nabla_{k-1}}(\partial_{k-1}u) \circ \rho_{k-1} \circ ... \circ \rho_{m+2} \\
&& \circ
\Hol_{\nabla_{m+1}}(\partial_{m+1}u)\circ \rho_{m+1} \circ \Hol_{\nabla_m}(\partial_m u) \circ \rho_m \circ 
\Hol_{\nabla_{m-1}}(\partial_{m-1}u) \circ \rho_{m-1} \circ \cdots \circ \rho_1 \circ\Hol_{\nabla_0}(\partial_0 u) \\ 
&=& e^{2\pi i\int_{\bD}u ^* \omega_\bC} e^{2\pi\int_{\partial_m\bD}\theta_\psi} \Hol_{\nabla_k}(\partial_k u) \circ \rho_k 
\circ\Hol_{\nabla_{k-1}}(\partial_{k-1}u) \circ \rho_{k-1} \circ ... \circ \rho_{m+2} \\
&& \circ
\Hol_{\nabla_{m+1}}(\partial_{m+1}u)\circ \rho_{m+1} \circ \Hol_{\nabla_m}(\partial_m u) \circ \rho_m \circ
\Hol_{\nabla_{m-1}}(\partial_{m-1}u) \circ \rho_{m-1} \circ \cdots \circ \rho_1 \circ\Hol_{\nabla_0}(\partial_0 u) \\ 
&=& \rho(\rho_k,\ldots, \rho_1; u) e^{2\pi\int_{\partial_m \bD } \theta_\psi }
\end{eqnarray*}
\end{proof}

\color{black}

Following the notation depicted in Figure \ref{fig:psi2}, we set up the domain so that the new $u'$ (obtained by isotoping one Lagrangian from the original map $u$) may be defined. Let $u_t$ denote $u'$ when $L_m'$ is taken to be $\psi(L_m,t)$, and similarly define $p_t$. As a corollary of Theorem \ref{prop:polygon-area}, we have the following main result of our paper. 

\begin{theorem}[How structure maps change under Lagrangian isotopy]\label{thm:str_map_isotopy}  We assume the conditions of Remark \ref{rem:convergence}. Let $\hat{\psi}$ be an isotopy between $\hL_m$ and $\hL'_m$ such that the map
$\psi: L_m\times [0,1] \to X$ is transverse to $L_{m-1}$ and $L_{m+1}$ and for all $t\in [0,1]$, we have $\psi(p_m,t)\in L_{m-1}, \; \psi(p_{m+1},t)\in L_{m+1}.$ We also assume that $\cup_{t \in [0,1]} \cM^0(\uX, (\hL_0,\ldots,\psi(\hL_m,t),\ldots, \hL_k), (p_0,\ldots, p_k);[u_t])$ is compact, where $\cM^0$ denotes the 0-dimensional part of $\cM$.

Then if $ \theta_\psi = df$ for $\theta_\psi$ defined in Equation \eqref{eq:1form_defn}, the structure maps are related by
\begin{equation} 
\mu^k (\rho_k,\ldots,\rho_m',\ldots, \rho_1) 
=e^{2\pi (f(p_{m+1}) - f(p_m))}\mu^k (\rho_k,\ldots, \rho_1).
\end{equation}

\end{theorem}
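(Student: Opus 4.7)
The plan is to combine the cobordism invariance of $J$-holomorphic disc counts along the Lagrangian isotopy with the weight-change formula of Theorem \ref{prop:polygon-area}, and then to use exactness of $\theta_\psi$ to factor the weight change out of the sum defining $\mu^k$.

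First, for $t\in[0,1]$ set $L_m^t := \psi_t(L_m)$, $p_m^t := \gamma_m(t)$, $p_{m+1}^t := \gamma_{m+1}(t)$, and let $\cM_t^0([u_t])$ denote the $0$-dimensional moduli space of $J$-holomorphic discs with boundary on $L_0,\ldots,L_m^t,\ldots,L_k$, corners at $p_0,\ldots,p_m^t,p_{m+1}^t,\ldots,p_k$, in the homotopy class $[u_t]$ obtained by tracking $[u_0]$ along the isotopy. The transversality of $\psi$ to $L_{m-1}$ and $L_{m+1}$, together with the assumed compactness of $\bigcup_{t\in[0,1]}\cM_t^0$ and generic parametric transversality, make the total space a compact $1$-manifold with boundary $\cM_0^0([u_0]) \sqcup \cM_1^0([u_1])$, yielding the cobordism identity
$$\sharp\,\cM_0^0([u_0]) = \sharp\,\cM_1^0([u_1]).$$

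Next I would transfer the weights. Given a $J$-holomorphic representative $u_0$ of $[u_0]$, the piecewise-smooth disc $u'$ built from $u_0$ via Equation \eqref{eq:u'_mult_Lagr} lies in the space $\cD(X,(L_0,\ldots,L_m',\ldots,L_k),(p_0,\ldots,p_m',p_{m+1}',\ldots,p_k))$. Running the construction of $u'$ at partial times $s\in[0,1]$ (using $\psi|_{[s,1]}$) produces a continuous family linking $u'$ to any $J$-holomorphic representative $u_1$ of $[u_1]$ supplied by the cobordism; this is a homotopy in $\cD$, so by the multi-Lagrangian homotopy invariance of $\rho$ in Lemma \ref{rho},
$$\rho(\rho_k,\ldots,\rho_{m+1}',\rho_m',\ldots,\rho_1;u_1) = \rho(\rho_k,\ldots,\rho_{m+1}',\rho_m',\ldots,\rho_1;u').$$
Theorem \ref{prop:polygon-area} combined with Remark \ref{rem:exact_integrl} with $\theta_\psi=df$ then gives
$$\rho(\rho_k,\ldots,\rho_{m+1}',\rho_m',\ldots,\rho_1;u_1) = \rho(\rho_k,\ldots,\rho_1;u_0)\cdot e^{2\pi(f(p_{m+1})-f(p_m))},$$
and the key point is that the exponential factor is independent of $[u_0]$.

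Substituting into Definition \ref{def:structure maps} and using the cobordism identity term by term yields
$$\mu^k(\rho_k,\ldots,\rho_m',\ldots,\rho_1) = e^{2\pi(f(p_{m+1})-f(p_m))}\sum_{[u_0]} \sharp\,\cM_0^0([u_0])\,\rho(\rho_k,\ldots,\rho_1;[u_0]) = e^{2\pi(f(p_{m+1})-f(p_m))}\,\mu^k(\rho_k,\ldots,\rho_1),$$
with absolute convergence of both sides ensured by Remark \ref{rem:convergence}. The main obstacle is the cobordism step: it rests on the stated compactness of the parametric moduli space to preclude sphere and disc bubbling at intermediate $t$, and on parametric transversality for the $1$-parameter family of Lagrangian boundary conditions; without these the boundary of $\bigcup_t \cM_t^0$ could acquire strata beyond $\cM_0^0 \sqcup \cM_1^0$ and the count comparison would fail. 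A secondary bookkeeping point is to verify that the grading data is consistently transported, so that the target degree of $\mu^k$ is preserved by the isotopy, which follows from the continuity of $\widetilde{\alpha}$ together with the discreteness of $\deg$.
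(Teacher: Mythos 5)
Your proposal is correct and follows essentially the same route as the paper: the weight change comes from Theorem \ref{prop:polygon-area} together with Remark \ref{rem:exact_integrl} (with the factor $e^{2\pi(f(p_{m+1})-f(p_m))}$ independent of $[u]$ so it pulls out of the sum), and the equality of counts comes from the parametric moduli space over $t\in[0,1]$ being a compact $1$-manifold whose boundary has vanishing signed count. Your additional step invoking Lemma \ref{rho} to identify the weight of the piecewise-defined $u'$ with that of an actual $J$-holomorphic representative at $t=1$, and your flagged caveats about parametric transversality and bubbling, only make explicit what the paper's proof leaves implicit.
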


\begin{proof}
    Recall that
    \begin{equation} 
\mu^k (\rho_k,\ldots, \rho_1) 
=\sum_{\substack{ p_0\in L_0\cap L_k \\ \deg(\tL_0, \tL_k; p_0) 
=2-k+\sum_{j=1}^k i_j \\ [u] } }
\sharp \cM(\uX, (\hL_0,\ldots, \hL_k), (p_0,\ldots, p_k);[u])
\rho(\rho_k,\ldots, \rho_1;[u]).
\end{equation}
By Remark \ref{rem:exact_integrl} and Theorem \ref{prop:polygon-area}, 
$$
\rho(\rho_k,\ldots,\rho_m',\ldots \rho_1;[u'])=e^{2\pi (f(p_{m+1}) - f(p_m))}\rho(\rho_k,\ldots, \rho_1;[u]).
$$
To relate the moduli spaces, we take the 1-dimensional family of moduli spaces obtained by the Lagrangian isotopy 
$$
\cup_{t \in [0,1]} \cM(\uX, (\hL_0,\ldots,\psi(\hL_m,t),\ldots, \hL_k), (p_0,\ldots, p_k);[u_t]).
$$
Then the signed count of the boundary of a compact 1-dimensional manifold is 0, so the count of moduli spaces at either end is equal. In other words, 
$$
\cM(\uX, (\hL_0,\ldots,\hL_m,\ldots, \hL_k), (p_0,\ldots, p_k);[u])
$$ 
is equal to 
$$
\cM(\uX, (\hL_0,\ldots,\hL_m',\ldots, \hL_k), (p_0,\ldots,p_{m-1}',p_m',p_{m+1}',\ldots p_k);[u'])
$$ 
and 
\begin{equation} 
\mu^k (\rho_k,\ldots,\rho_m',\ldots, \rho_1) 
=e^{2\pi (f(p_{m+1}) - f(p_m))}\mu^k (\rho_k,\ldots, \rho_1).
\end{equation}
\end{proof}

\color{black}

\bibliographystyle{amsalpha}
\bibliography{glob_gen2_hms}

\end{document}